\documentclass[10pt,reqno,twoside]{amsart}

\usepackage[english]{babel}

\usepackage[T1]{fontenc}
\usepackage[utf8]{inputenc}

\usepackage[margin=1in]{geometry}

\usepackage{amsmath,amssymb,amsthm}
\usepackage{mathtools,mathrsfs}

\usepackage{theoremref}

\numberwithin{equation}{section} \numberwithin{figure}{section} 

\theoremstyle{plain} 
\newtheorem{thm}{\protect\theoremname}
\newtheorem{prop}[thm]{\protect\propositionname} \newtheorem{lem}[thm]{\protect\lemmaname} \newtheorem{cor}[thm]{\protect\Corollaryname}  

\theoremstyle{definition} 
 
   \newtheorem{rem}{\protect\remarkname} 

\makeatother

\usepackage{microtype} 

\providecommand{\theoremname}{Theorem}
\providecommand{\propositionname}{Proposition}
\providecommand{\lemmaname}{Lemma}
\providecommand{\Corollaryname}{Corollary}
\providecommand{\questionname}{Question}
\providecommand{\defnname}{Definition}
\providecommand{\exname}{Example}
\providecommand{\notationname}{Notation}
\providecommand{\Conventionname}{Convention}
\providecommand{\remarkname}{Remark}

\theoremstyle{plain} 
\newtheorem{mainthm}{Theorem} 

\usepackage{graphicx}
\usepackage[all]{xy}
\usepackage{tikz}
\usepackage{tikz-cd}
\usepackage{tkz-tab}
\usetikzlibrary{decorations.markings}
\usepackage[new]{old-arrows}

\usepackage{paralist}
\usepackage{subfigure}
\usepackage{multicol}
\usepackage{comment}
\usepackage{placeins}

\usepackage{xcolor}

\definecolor{Chocolat}{rgb}{0.36,0.20,0.09}
\definecolor{BleuTresFonce}{rgb}{0.215,0.215,0.86}

\usepackage[
colorlinks,
final,
hyperindex,
pagebackref=true
]{hyperref}

\hypersetup{
    linkcolor=blue,
    citecolor=purple,
    filecolor=BleuTresFonce,
    urlcolor=BleuTresFonce
}
\usepackage[capitalise]{cleveref}

\newcommand{\A}{\mathbf{A}}
\newcommand{\C}{\mathbf{C}}

\newcommand{\G}{\mathbb{G}}
\newcommand{\N}{\mathbf{N}}
\newcommand{\Q}{\mathbf{Q}}

\newcommand{\Z}{\mathbf{Z}}

\newcommand{\PP}{\mathbb{P}}

\DeclareMathOperator{\Aut}{Aut}

\DeclareMathOperator{\Spec}{Spec}

\DeclareMathOperator{\Cl}{Cl}
\DeclareMathOperator{\Pic}{Pic}

\DeclareMathOperator{\rank}{rank}

\DeclareMathOperator{\id}{id}

\tikzstyle{vertex}=[circle,draw,inner sep=0pt,minimum size=5pt]

\begin{document}
\title[Smooth affine surfaces properly dominated by $\C^*\times\C^*$]{Smooth affine surfaces properly dominated by $\C^*\times\C^*$}

\author[B. Hajra]{Buddhadev Hajra}
\address{Stat-Math Unit, Indian Statistical Institute Kolkata,
203 B.~T.~Road, Baranagar, Kolkata 700108, India}
\email{hajrabuddhadev92@gmail.com}

\subjclass[2020]{14F35, 14F45, 14J10, 55P20}

\keywords{Affine surface, finite morphism,
logarithmic Kodaira dimension, Eilenberg--MacLane space}

\begin{abstract}
We classify smooth complex affine surfaces admitting a finite surjective
morphism from $\C^*\times\C^*$. Using the
classification theory of smooth affine surfaces according to logarithmic
Kodaira dimension, we show that every such surface has logarithmic Kodaira
dimension either $-\infty$ or $0$, and determine all possibilities. More
precisely, the only surfaces of logarithmic Kodaira dimension $-\infty$ are
$\C^2$ and $\C\times\C^*$, while those of logarithmic Kodaira dimension $0$
are precisely $\C^*\times\C^*$ and Fujita's surface $H[-1,0,-1]$. This
establishes the classification of smooth affine surfaces properly dominated by
$\C^*\times\C^*$ anticipated by M.~Furushima in 1989.
\end{abstract}

\maketitle

\tableofcontents

\section{\bf Introduction}\label{se1}

Throughout, all algebraic varieties and morphisms are defined over the field of complex numbers $\C$.

\medskip

An algebraic variety $V$ is said to be \emph{properly dominated} by an algebraic variety $W$ if there exists a proper surjective morphism $\varphi\colon W\to V$. Since proper morphisms between affine varieties are finite, if $V$ and $W$ are affine, this is equivalent to requiring that $\varphi$ be finite and surjective. Equivalently, writing $V=\Spec A$ and $W=\Spec B$, where $A$ and $B$ are finitely generated $\C$-algebras, proper domination is equivalent to $B$ being integral (or, equivalently, finite) over $A$.

A fundamental source of examples is provided by Noether normalization, which asserts that every affine variety of dimension $d$ admits a finite surjective morphism onto the affine space $\A^d$. Thus every affine variety properly dominates the affine space of the same dimension.

The problem becomes considerably subtler when the dominating variety is fixed and one seeks to determine the resulting geometric or structural restrictions on the target. This question has long been an important theme in affine algebraic geometry. For affine surfaces, it is closely related to affine fibrations, the logarithmic Kodaira dimension, and the geometry of the boundary divisor in a suitable log-smooth completion.

Normal affine surfaces properly dominated by $\C^2$ have been extensively studied by Miyanishi, Gurjar, Shastri, Furushima, and others (see, for example, \cite{Miy1980b,Miy1986} for algebro-geometric proofs and \cite{Gur1980a,GS1984,GS1985} for topological proofs). Furushima subsequently investigated an analytic analogue of this problem by replacing finite surjective algebraic morphisms by proper holomorphic maps from $\C^2$ to normal complex analytic surfaces admitting analytic compactifications\footnote{A normal complex analytic surface $X$ is said to admit an analytic compactification if there exist a normal compact complex surface $\bar{X}$ and an analytic subset $Z\subseteq\bar{X}$ such that $\bar{X}\setminus Z$ is biholomorphic to $X$.} \cite{Fur1986}. He later studied normal affine surfaces properly dominated by $\C\times\C^*$ \cite{Fur1989}. The next natural problem is therefore to classify affine surfaces properly dominated by $\C^*\times\C^*$.

Furushima's analysis of the $\C\times\C^*$ case relies heavily on the Nishino--Suzuki theorem on cluster sets of holomorphic mappings from a punctured disc into a smooth compact complex surface, together with Suzuki's study of analytic compactifications of $\C\times\C^*$. Using these techniques, Furushima remarked that analogous methods should yield a classification of normal affine surfaces properly dominated by $\C^*\times\C^*$, namely
\[
\C^2,\qquad
\C\times\C^*,\qquad
\C^*\times\C^*,
\]
and suitable finite quotients of these surfaces by subgroups of $GL(2,\C)$ (see \cite[Remark]{Fur1989}). Although Furushima indicated that the details would appear elsewhere, to the best of the author's knowledge no complete proof has subsequently appeared in the literature.

The objective of the present article is to establish the classification of smooth affine surfaces properly dominated by $\C^*\times\C^*$, thereby confirming Furushima's prediction in the smooth case. Our approach is entirely different from Furushima's analytic methods. Instead, it relies on recent advances in the theory of open algebraic surfaces, most notably the classification of smooth non-contractible affine Eilenberg--MacLane $K(\pi,1)$-surfaces established in \cite[Result~2.1]{GGH2023}.

\medskip

The main result of this article is the following.

\begin{mainthm}[Theorems~\ref{Thm: Affine surfaces with kappa negative properly dominated by 2-torus} and \ref{Thm: Affine surfaces with kappa=0 properly dominated by 2-torus}]
\label{thm:mainB}
Let $Y$ be a smooth affine surface properly dominated by $\C^*\times\C^*$. Then $\bar\kappa(Y)\in\{-\infty,0\}$, and the following assertions hold.

\begin{enumerate}[\rm(1)]
\item If $\bar\kappa(Y)=-\infty$, then
\[
Y\cong\C^2
\quad\text{or}\quad
Y\cong\C\times\C^*.
\]

\item If $\bar\kappa(Y)=0$, then
\[
Y\cong\C^*\times\C^*
\quad\text{or}\quad
Y\cong H[-1,0,-1].
\]
\end{enumerate}
\end{mainthm}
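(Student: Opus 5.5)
Let $\varphi\colon X=\C^*\times\C^*\to Y$ be finite surjective of degree $d$. First I extract the basic invariants of $Y$ from $\varphi$. Since $\varphi$ is finite between smooth surfaces, the logarithmic ramification formula gives $K_{\bar X}+D_X=\bar\varphi^*(K_{\bar Y}+D_Y)+R$ with $R$ effective, after choosing compatible log smooth completions. Because $\bar\kappa(X)=0$, this yields $\bar\kappa(Y)\le\bar\kappa(X)=0$, hence $\bar\kappa(Y)\in\{-\infty,0\}$. Topologically, a finite map of normal varieties induces a homomorphism $\pi_1(X)\to\pi_1(Y)$ whose image has finite index. So $\pi_1(Y)$ contains a finite-index quotient of $\Z^2$; in particular it is virtually abelian of rank at most $2$. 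Also $e(Y)\cdot d=e(X)+(\text{correction from branching})$. Pulling back along a suitable unramified-in-codimension-one situation is not available in general, so I would instead use the rational inequality $H^*(Y;\Q)\hookrightarrow H^*(X;\Q)$, which holds because of the transfer for finite maps. This gives $b_1(Y)\le 2$, $b_2(Y)\le 1$, and $b_3=b_4=0$.

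In the case $\bar\kappa(Y)=-\infty$, $Y$ carries an $\A^1$-fibration over a smooth curve $B$. Since $Y$ is affine, $B$ is affine. Composing, $X\to Y\to B$ gives a nonconstant morphism $\C^*\times\C^*\to B$, so $\bar\kappa(B)\le 0$ and $B\in\{\C,\C^*\}$. Singular fibres are ruled out using the injectivity of rational cohomology together with $\Pic(Y)\otimes\Q$. The argument is as follows: each reducible or multiple fibre contributes to $b_2$ or to torsion in $\pi_1$. The pullback of a fibre component under $\varphi$ is a curve in $X$ that is a principal divisor, so $\Pic(Y)\otimes\Q\hookrightarrow \Pic(X)\otimes\Q=0$, and hence $\Pic(Y)$ is torsion. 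For an $\A^1$-fibration, $\Pic(Y)$ torsion together with $H^2(Y;\Q)$ small forces every fibre to be irreducible. Multiple fibres would give non-abelian orbifold $\pi_1$ once there are at least two of them over $\C$, or one over $\C^*$; this contradicts virtual abelianness combined with $\bar\kappa(Y)=-\infty$. I would verify the remaining cases carefully, using that $\pi_1(Y)$ is a quotient image of finite index. The conclusion is $Y\cong\C^2$ or $\C\times\C^*$.

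In the case $\bar\kappa(Y)=0$, I use that $\varphi$ is in fact étale in codimension one. Indeed, equality $\bar\kappa(X)=\bar\kappa(Y)=0$ in the ramification formula (Iitaka's argument) forces the ramification divisor $R$ to be supported on the boundary. Since $X,Y$ are smooth, purity of the branch locus then makes $\varphi$ étale. Hence $\pi_1(X)\cong\Z^2$ is a finite-index subgroup of $\pi_1(Y)$, and the universal cover of $Y$ equals that of $X$, which is contractible. So $Y$ is an Eilenberg--MacLane $K(\pi,1)$ surface, noncontractible with $\bar\kappa=0$, and $e(Y)=e(X)/d=0$. Now I invoke the classification of smooth non-contractible affine $K(\pi,1)$-surfaces in \cite[Result~2.1]{GGH2023}. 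From that list I keep only those with $\bar\kappa=0$, $e=0$ and $\pi_1$ virtually $\Z^2$. I expect these to be exactly $\C^*\times\C^*$ and Fujita's $H[-1,0,-1]$, whose $\pi_1$ is the Klein bottle group and which is double-covered étale by $\C^*\times\C^*$. Both indeed occur.

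The main obstacle is the étaleness step: one must control the behaviour of the log ramification at the boundary to ensure no interior branching, since a priori $R$ could meet $X$. If that fails I would fall back on Fujita's classification of $\bar\kappa=0$ surfaces with $\Pic$ torsion and $b_2\le1$, checking case by case.
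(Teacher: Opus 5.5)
\textbf{Case $\bar\kappa(Y)=-\infty$: genuine gap at the multiple fibres.}
Your orbifold-group argument only removes configurations whose orbifold fundamental group is not virtually abelian: a multiple fibre over $\C^*$, at least three over $\C$, or two over $\C$ not both of multiplicity $2$. It says nothing about two remaining configurations:
\begin{itemize}
\item a single multiple fibre over $\C$, where $\pi_1(Y)\cong\Z/m$;
\item two double fibres over $\C$, where $\pi_1(Y)\cong\Z/2*\Z/2$ contains $\Z$ with index $2$.
\end{itemize}
In both, $\pi_1(Y)$ has a finite-index subgroup that is a quotient of $\Z^2$. So no statement about $\pi_1(Y)$ alone gives a contradiction, nor does ``combined with $\bar\kappa(Y)=-\infty$''. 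Your fallback ``using that $\pi_1(Y)$ is a quotient image of finite index'' cannot help either.

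The missing idea, which is the paper's, is to go up to the finite \'etale cover $Y'\to Y$ from the ramified covering trick.
\begin{itemize}
\item A multiple fibre is $mA$ with $A\cong\A^1$ simply connected. Its preimage in $Y'$ is therefore a disjoint union of copies of $A$, and the fibres of $Y'\to B'$ over the points above it consist of $m$ such copies. These fibres are reducible, so $\rho(Y')>0$.
\item Any component of $X\times_Y Y'$ is a connected finite \'etale cover of $\G_m^2$. By Proposition~\ref{Prop: Finite covering of complement of a finite subset from 2-torus} it is again $\G_m^2$, and it properly dominates $Y'$. This forces $\rho(Y')=0$, a contradiction.
\end{itemize}
Your proposal never uses this stability of the torus under \'etale covers, and it is what makes the passage to covers work.

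Separately, ``since $Y$ is affine, $B$ is affine'' is false: $\PP^1\times\PP^1\setminus\Delta\to\PP^1$ is an $\A^1$-bundle on an affine surface. The exclusion $B\neq\PP^1$ must come from $\rho(Y)=0$, as in Lemma~\ref{Lem: Q-factorial fibration}.

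\textbf{Case $\bar\kappa(Y)=0$: the \'etaleness claim is true, but not for the reason you give.}
Equality of (log) Kodaira dimensions does not by itself keep $R$ off the interior. For example, the double cover of a blown-up abelian surface onto its Kummer surface has $\kappa=0$ on both sides and ramifies along sixteen curves. What works is specific to the torus:
\begin{itemize}
\item Take $\bar X$ to be a blow-up of $\PP^1\times\PP^1$ at boundary points on which $f$ extends to $\bar f\colon\bar X\to\bar Y$.
\item On $\PP^1\times\PP^1$ one has $K+D\sim0$, and blowing up a point of an SNC boundary only adds a non-negative multiple of the new boundary curve. Hence $K_{\bar X}+D_X\sim N$ with $N\ge0$ supported on $D_X$.
\item Since $\bar\kappa(X)=0$, $mN$ is the unique member of $|m(K_{\bar X}+D_X)|$.
\item Because $\bar\kappa(Y)=0$, some $E\in|m(K_{\bar Y}+D_Y)|$ exists. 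Then $\bar f^*E+mR$ lies in that same system, so $mR\le mN$ and $R|_X=0$.
\item Purity of the branch locus then makes $f$ \'etale.
\end{itemize}

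With this filled in, your $\bar\kappa=0$ case is a genuinely different and much shorter route than the paper's. The paper never shows that $f$ itself is \'etale. Instead it factors $f$ through the $\pi_1$-surjective cover $Z$ and identifies $Z\cong\C^*\times\C^*$ using the $\mathscr S_0$/$\pi_1^\infty$/Seifert-fibration argument, the Betti-number obstructions, and Kojima's list. Both routes then finish with the classification of affine $K(\pi,1)$-surfaces in \cite{GGH2023}.
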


The appearance of the surface $H[-1,0,-1]$, introduced by Fujita \cite{Fuj1982}, is particularly noteworthy. It was proved in \cite[Proposition~5.8]{GGH2023} that $H[-1,0,-1]$ admits a finite \'etale double cover
\[
\pi\colon\C^*\times\C^*\to H[-1,0,-1].
\]
Since every connected double cover is Galois, the deck transformation group $\operatorname{Deck}(\pi)$ is isomorphic to $\Z/2\Z$. Hence there exists a fixed-point-free involution $\sigma\in\Aut(\C^*\times\C^*)$ such that
\[
\operatorname{Deck}(\pi)=\langle\sigma\rangle
\quad\text{and}\quad
H[-1,0,-1]\cong(\C^*\times\C^*)/\langle\sigma\rangle.
\]
Thus Theorem~\ref{thm:mainB} confirms Furushima's predicted classification in the smooth setting.

\medskip

The paper is organized as follows. In Section~\ref{se2}, we collect the preliminary material used throughout the paper. Besides fixing notation and conventions, we recall the Suzuki--Zaidenberg formula for the topological Euler characteristic of smooth affine surfaces admitting affine fibrations, several auxiliary results on affine fibrations and finite morphisms, the theory of the fundamental group at infinity of normal algebraic surfaces due to Mumford and its extension by Wagreich, and the class $\mathscr{S}_0$ of smooth factorial affine surfaces of logarithmic Kodaira dimension zero with trivial units. Section~\ref{se3} contains the proofs of the main results. We first establish the case $\bar\kappa=-\infty$ by proving Theorem~\ref{Thm: Affine surfaces with kappa negative properly dominated by 2-torus}. We then develop several auxiliary results on finite \'etale covers of complements of finite subsets of $\C^*\times\C^*$ and on certain classes of affine surfaces that cannot be properly dominated by $\C^*\times\C^*$. These ingredients are combined in the final part of the section to prove Theorem~\ref{Thm: Affine surfaces with kappa=0 properly dominated by 2-torus}.
\section{\bf Preliminaries}\label{se2}

We begin by fixing notation and conventions that will be used throughout the article.

\subsection{Notations \& Conventions}

\begin{enumerate}[\indent$\bullet$]

\item The $n$-dimensional affine and projective spaces over $\C$ are denoted
by $\A^n$ (or sometimes by $\C^n$) and $\PP^n$, respectively, for all
$n\in\Z_{\ge0}$.

\item For a complex algebraic variety $X$, we denote by $X^{\rm an}$ its
associated complex analytic space equipped with the usual complex Euclidean topology whenever it is necessary to distinguish
between the algebraic and analytic categories.

\item We write $\G_m=\Spec\C[t^{\pm1}]$ for the algebraic one-dimensional
torus over $\C$ and $\G_m^2=\Spec\C[t^{\pm1},u^{\pm1}]$ for the algebraic
two-dimensional torus over $\C$. Their associated complex analytic spaces
are canonically identified with $\C^*$ and $\C^*\times\C^*$, respectively.
Following the standard convention in the literature, we shall often use the
notation $\C^*$ in place of $\G_m$ whenever no confusion between the
algebraic and analytic categories can arise.

\item If $X$ is a simplicial complex or a finite CW complex, then for
$i\in\Z_{\ge0}$ we denote by $\pi_1(X)$, $H_i(X;G)$ (resp.~$H^i(X;G)$) the
fundamental group, the $i$-th homology (resp.~cohomology) group of $X$ with
coefficients in $G$, respectively.

\item Unless explicitly stated otherwise, when $X$ is a complex algebraic variety, the invariants $\pi_1(X)$, $H_i(X;G)$ and $H^i(X;G)$ refer to the corresponding topological invariants of the associated complex analytic space $X^{\rm an}$.

\item For a smooth non-complete surface $S$, we use the notation
\[
\bar\kappa(S): \text{ the logarithmic Kodaira dimension of }S.
\]
For the definition of $\bar\kappa$, see~\cite{Iit1982}.

\item For a smooth complex algebraic variety $X$, we adopt the notation
\[
b_i(X)\;(i\in\Z_{>0}): \text{ the $i$-th Betti number }
\dim_{\C}H_i(X;\C),
\]
\[
e(X): \text{ the Euler--Poincar\'e characteristic of }X,\text{ defined by }
e(X)=\sum_{i=0}^{\infty}(-1)^ib_i(X).
\]

\item For a smooth complex affine variety $X$, we reserve the notation
\[
\rho(X):=\rank_{\Z}\Pic(X)
=\dim_{\Q}\bigl(\Pic(X)\otimes_{\Z}\Q\bigr).
\]

\end{enumerate}

\bigskip

We now recall some well-known results that will frequently be used in the subsequent sections.

\subsection{Suzuki--Zaidenberg formula for the topological Euler-Poincar{\'e} characteristic}

M.~Suzuki proved an important formula for the Euler--Poincar\'e characteristic of a smooth affine surface fibered over a smooth algebraic curve (cf.~\cite{Suz1977}).

\begin{thm}[{Suzuki--Zaidenberg Formula; cf.~\cite{Suz1977, Zai1987}}]
\label{Suzuki's formula}
Let $f\colon V\to C$ be a surjective morphism from a smooth affine surface $V$ onto a smooth algebraic curve $C$ such that a general fiber $F$ of $f$ is irreducible. Let $p_1,\ldots,p_n$ be the points of $C$ at which $f$ is not $\mathscr{C}^{\infty}$-locally trivial. Then
\[
e(V)=e(C)\cdot e(F)+\sum_{i=1}^{n}\bigl(e(F_i)-e(F)\bigr),
\]
where $F_i:=f^{-1}(p_i)$ for $1\le i\le n$.

Moreover, each summand $e(F_i)-e(F)$ is non-negative. If $e(F_i)=e(F)$ for some $i$, then $F$ is isomorphic to either $\C$ or $\C^{\ast}$, and $(F_i)_{\mathrm{red}}$ is isomorphic to $F$.
\end{thm}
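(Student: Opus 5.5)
The plan is to reduce the formula to additivity and multiplicativity of the Euler characteristic over a stratification of $C$. Set $B:=C\setminus\{p_1,\dots,p_n\}$ and $V_B:=f^{-1}(B)$. By hypothesis, $f|_{V_B}\colon V_B\to B$ is a $\mathscr{C}^\infty$-locally trivial fibre bundle with fibre $F$. The multiplicativity of $e$ for fibre bundles over a finite CW base then gives $e(V_B)=e(B)\,e(F)=(e(C)-n)\,e(F)$. Each $F_i$ is a closed algebraic curve in $V$. Compactly supported Euler characteristics are additive on complex algebraic varieties and agree with ordinary Euler characteristics there, so
\[
e(V)=e(V_B)+\sum_{i=1}^n e(F_i)=e(C)e(F)+\sum_{i=1}^n\bigl(e(F_i)-e(F)\bigr).
\]
This proves the formula. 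The real content lies in the \emph{moreover} part.

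For non-negativity I would work locally around each $p_i$. Choose a small closed disc $\Delta\ni p_i$ in $C$ and set $T:=f^{-1}(\Delta)$. This neighbourhood deformation-retracts onto $F_i$, so $e(T)=e(F_i)$. The punctured neighbourhood $T^*:=T\setminus F_i$ fibres over $\Delta^*$ with fibre $F$, so $e(T^*)=0$. Since $V$ is affine, $F$ is an affine curve, a connected surface of genus $g$ with $r\ge1$ punctures; hence $e(F)=2-2g-r\le1$.

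Next I would compactify: embed $V$ into a smooth projective surface $\bar V$ with SNC boundary $D$ such that $f$ extends to a morphism $\bar f\colon\bar V\to\bar C$, possible after resolving indeterminacy. Near $p_i$, the closure $\bar F_i$ of the full fibre of $\bar f$ is a connected curve, and it deforms to the smooth complete fibre $\bar F$. The horizontal boundary components meet $\bar F$ in $r$ points. Two standard facts then give the comparison:
\[
e(\bar F_i)\ge e(\bar F),
\qquad e(F_i)=e(\bar F_i)-e(\bar F_i\cap D).
\]
The first holds because $e(\bar F_i)-e(\bar F)$ is the sum of Milnor numbers of the singular points of the fibre, which are $\ge0$. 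For the second, I need the number of points of $\bar F_i\cap D$ not lying on vertical boundary to be at most $r$. In addition, each vertical boundary component in $\bar F_i$ removes a curve; the affineness of $V$ (and hence the connectivity of $D$) forces $F_i$ to hit every horizontal branch.

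I expect this bookkeeping to be the main obstacle. It needs a careful case analysis showing that deleting the boundary part of $\bar F_i$ cannot lower the Euler characteristic below $e(F)$; the cleanest route is probably Zaidenberg's argument via Milnor fibres at infinity. For the equality case, $e(F_i)=e(F)$ forces all local Milnor contributions to vanish both at finite distance and at infinity. The standard consequence is that $F$ has no genus and no essential punctures, so $e(F)\in\{0,1\}$ with $F\cong\C$ or $\C^*$, and that $(F_i)_{\rm red}$ is smooth, irreducible and of the same type. If this becomes too delicate, I would cite \cite{Suz1977,Zai1987} for this final step, since the paper states the result as a recalled theorem.
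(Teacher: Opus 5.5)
The paper does not prove this theorem. It quotes it from \cite{Suz1977,Zai1987} and points to \cite{Gur1997} for an algebraic proof. Your derivation of the displayed formula is correct and standard: $e$ is multiplicative for the locally trivial fibration over $C\setminus\{p_1,\dots,p_n\}$, $e_c$ is additive, and $e_c=e$ for complex algebraic varieties. The real content of the theorem is the \emph{moreover} part, and your proposal does not prove it. The first problem is that your claim that $T=f^{-1}(\Delta)$ deformation-retracts onto $F_i$ is false when $f$ is not proper. Take $f=x(xy-1)$ on $\C^2$: it has no critical points and its general fibre is $\C^*$. Here $f^{-1}(0)$ is the disjoint union of the line $x=0$ and the hyperbola $xy=1$. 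But $f^{-1}(\Delta)$ is connected, since it is the closure of the connected set $f^{-1}(\Delta\setminus\{0\})$. The identity $e(T)=e(F_i)$ is still true, but it needs a constructible-sheaf argument rather than a retraction. In any case you never use $T$ afterwards.

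The second and more serious problem is the inequality you actually need, $e(\bar F_i)-e(\bar F)\ge e(\bar F_i\cap D)-r$. You call it bookkeeping, but it is exactly where the affineness of $V$ must enter, and it fails without affineness. Take $V=\C^2\setminus\{0\}$ and $f$ the first projection. Then $F\cong\C$ and $F_0\cong\C^*$, so $e(F_0)<e(F)$. In the completion obtained by blowing up $\PP^1\times\PP^1$ at the origin, $e(\bar F_0)-e(\bar F)=1$ while $e(\bar F_0\cap D)-r=2$. The cause is the exceptional curve, a vertical boundary component inside $\bar F_0$ that is disconnected from the rest of $D$. What is missing is a mechanism by which affineness controls the vertical boundary components and the lost horizontal points in $\bar F_i$. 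Possible sources are Goodman's criterion (after blow-ups, $D$ supports an ample divisor) or Steinness, as in Zaidenberg's plurisubharmonic argument; your proposal supplies neither. The equality-case sketch, with its ``no essential punctures'', is likewise not an argument. Also, the Milnor-number justification of $e(\bar F_i)\ge e(\bar F)$ only covers reduced fibres with isolated singular points; for non-reduced fibres you need the standard result for fibrations of compact surfaces. Citing \cite{Suz1977,Zai1987} for the \emph{moreover} part, as you suggest at the end, is exactly what the paper does and is acceptable. In that case the compactification sketch should not be presented as a route to a proof.
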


Here, the last part about the non-negativity of every summand appearing in the above formula was essentially due to M.~Zaidenberg (cf. \cite{Zai1987}). The proofs in \cite{Suz1977} and \cite{Zai1987} use the theory of plurisubharmonic functions. A more algebro-geometric approach to the proof of this result was later given by R.~V.~Gurjar (cf.~\cite{Gur1997}).

\subsection{Some useful assorted results}

The following result is well-known. For the sake of completeness, we add a short proof. This result will be used in subsequent proofs.

\begin{lem}\label{Lem: Q-factorial fibration}
Let $f\colon X\to B$ be an $F$-fibration from a smooth affine surface $X$ onto a smooth algebraic curve $B$, where $F$ is a smooth affine rational curve. If $\rho(X)=0$, then the following holds.
\begin{enumerate}[\indent\rm(1)]
\item $B$ is affine.
\item All fibers of $f$ are irreducible.
\end{enumerate}
\end{lem}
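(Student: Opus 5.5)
The plan is to use the condition $\rho(X)=0$, i.e.\ $\Pic(X)$ is a torsion group, in the form: every divisor on $X$ has some nonzero multiple that is principal. Both assertions follow by producing suitable effective divisors supported on fibers and deriving a contradiction from the fact that a multiple of each is the divisor of a regular function on $X$. Throughout, $f$ is an $F$-fibration with $F\cong\C$ or $\C^*$, so general fibers are irreducible smooth affine rational curves.

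For (1), suppose $B$ is not affine. Then $B$ is a smooth complete curve, and I pass to a smooth completion if needed. Take a general fiber $F_0=f^{-1}(b_0)$, an irreducible reduced curve. Since $\rho(X)=0$, there is $n>0$ and a regular function $h\in\mathcal{O}(X)$ with $\operatorname{div}(h)=nF_0$; here $h$ is regular because the divisor is effective and $X$ is normal. On a general fiber $F$ different from $F_0$, the function $h$ has no zeros. A nowhere-vanishing regular function on $F\cong\C$ is constant, and on $F\cong\C^*$ it is $c\,t^k$. I want to show $h$ is constant along fibers; then $h=g\circ f$ for a rational function $g$ on $B$ (fibers are connected in general, and $X$ is normal). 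I would argue this via the units: $h$ restricted to $X\setminus F_0$ is a unit, and the degree $k$ of $h|_F$ is a continuous integer invariant, hence locally constant in $F$. If $k\neq0$ this would give a nontrivial monodromy-type class. The cleaner route is that $h$ has no zeros on the general fiber. The closure of a general fiber in a completion meets the boundary in one or two points, and there $h$ has zero or pole orders summing to $0$. The orders vary continuously and could be $\pm k$, so this route does not immediately help either.

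A more robust approach is to use the single irreducible fiber $F_0$ itself. If $h|_F$ is nonconstant, then the closure of $\{h=0\}$ would meet every fiber's closure, contradicting $\operatorname{div}(h)=nF_0$ only if $k=0$ is forced. I therefore expect the main obstacle to be exactly this step: showing $h$ factors through $f$. To overcome it I would use two fibers $F_0,F_1$. Take $h_0,h_1$ with $\operatorname{div}(h_i)=n F_i$ (after a common $n$). Then on general fibers both are $c_i t^{k_i}$, and the function $h_0^{k_1}/h_1^{k_0}$ is constant on general fibers, so it equals $g\circ f$ with $\operatorname{div}(g)=n k_1 b_0 - n k_0 b_1$ on $B$. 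If $B$ is complete this forces $k_0=k_1$, and varying $b_1$ among general points gives a rational function on the complete curve $B$ with divisor $m(b_0-b_1)$ for all $b_1$. That is impossible unless $B\cong\PP^1$. In the $\PP^1$ case I take three fibers and get contradictory constraints, or more simply note that when $k=0$ we have $h=g\circ f$ with $g$ regular on all of $B$ complete, so $g$ is constant, contradicting $\operatorname{div}(h)=nF_0\ne0$. If $k_0\neq0$, the quotient trick with $F_1$ reduces to a nonconstant regular $g$ with zeros only at $b_0$, and regularity on a complete $B$ again forces $g$ to be constant, a contradiction. Hence $B$ is affine.

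For (2), suppose a fiber $f^{-1}(b)=\sum m_iC_i$ has at least two components, say $C_1$. Choose $n$ with $nC_1=\operatorname{div}(h)$, $h$ regular. Then $h$ is a unit on $X\setminus C_1$, which contains all general fibers, so as above $h|_F$ has the form $c\,t^k$. Combining with a general fiber $F_1$ as in (1), we produce a function of the form $g\circ f$ whose divisor is supported on $C_1$ and $F_1$ with nonzero coefficient on $C_1$. But any pullback divisor contains the entire fiber $f^{-1}(b)$, including $C_2$, with positive multiplicity. This is a contradiction, so every fiber is irreducible.
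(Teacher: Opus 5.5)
You have a genuine gap, and it cannot be closed for $F\cong\C^*$, because the statement is false there.

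\textbf{What works.} Your argument is complete when $F\cong\C$. Then $k=0$ automatically, so $h=g\circ f$. Since $\operatorname{div}(h)\ge 0$ and every fibre of $f$ is non-empty, $g$ is regular on $B$. On a complete $B$ this forces $g$ to be constant, which gives (1). For (2), $f^{*}$ of a point contains the whole fibre, including $C_2$, which gives the contradiction. This route differs from the paper's Picard-basis argument and is cleaner: it does not need $b_1(V)=0$ or $\bar B\cong\PP^1$.

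\textbf{Where it fails.} The gap is the case $k\neq 0$, which can only occur for $F\cong\C^*$.
\begin{enumerate}
\item In (1), the quotient $h_0^{k}/h_1^{k}$ has divisor $nk(F_0-F_1)$. So $g$ has a pole at $b_1$; it is not a regular function with zeros only at $b_0$. On $B\cong\PP^1$ such a $g$ exists, so no contradiction arises. Taking three fibres does not help either, since every degree-zero divisor on $\PP^1$ is principal.
\item In (2), the quotient $h^{k_1}/h_1^{k}$ involves $C_1$ only if $k_1\neq 0$. Nothing forces the function cutting out a general fibre $F_1$ to be non-constant on the other fibres.
\end{enumerate}

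\textbf{Counterexamples for $F\cong\C^*$.}
\begin{enumerate}
\item The morphism $\C^2\to\PP^1$, $(u,v)\mapsto[u:uv-1]$, is well defined, since $u$ and $uv-1$ have no common zero. It is surjective. Its fibre over $[a:b]$ with $a\neq 0$ is $\{u(av-b)=a\}\cong\C^*$, and over $[0:1]$ it is the reduced line $\{u=0\}\cong\C$. So $\rho(\C^2)=0$ while the base is complete, contradicting (1). This is exactly your failing case: for $h_0=u(a_0v-b_0)-a_0$ one computes $h_0|_{F}=u\,(a_0b-ab_0)/a$, so $k_0=1$. The ratio $h_0/h_1$ is a degree-one rational function of $[a:b]$, with a zero and a pole.
\item The map $(u,v)\mapsto uv$ is a $\C^*$-fibration of $\C^2$ over $\C$ with the reducible fibre $\{uv=0\}$, contradicting (2). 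Here $h=u$ has $k=\pm1$, while every $h_1=uv-c$ has $k_1=0$.
\end{enumerate}

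\textbf{How to fix the write-up.} A correct proof must restrict to $F\cong\C$, where your argument suffices. The paper's own proof has the same hidden limitation. Its assertion that a general fibre together with fibre components freely generates $\Pic(X)$ tacitly uses that the boundary has a single horizontal component, a cross-section. That holds only for $\A^1$-fibrations, which is the only case in which the lemma is applied later (the $\bar\kappa=-\infty$ theorem).
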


\begin{proof}
Let $f \colon X \to B$ be an $F$-fibration from a smooth affine surface $X$ with $\rho(X)=0$ onto a smooth algebraic curve $B$. Let $X \subseteq V$ be an open embedding into a
smooth projective surface $V$ such that $f$ extends to a $\PP^1$-fibration
$\varphi \colon V \to \bar{B}$ over a smooth projective curve $\bar{B}$, where
$B \subseteq \bar{B}$ is the smooth completion.

Since $\rho(X)=0$, $\Pic(V)$ is finitely generated. Consequently, using the Hodge theory and the long exact cohomology sequence induced from the exponential sequence of analytic sheaves, it follows that $b_1(V)=0$. As a general fiber of $\varphi$ is isomorphic to $\PP^1$, which is connected, the induced homomorphism
\[
\varphi_\ast \colon H_1(V;\Z) \to H_1(\bar{B};\Z)
\]
is surjective. Hence $b_1(\bar{B}) \le b_1(V)=0$, which implies that
$\bar{B} \cong \PP^1$ and thus $B$ is rational.

Since $\bar{B} \cong \PP^1$, one general fiber of $\varphi$, a cross-section of
$\varphi$, and the compactifications (in $V$) of all but one irreducible component of each singular fiber of $f$ freely generate $\Pic(V)$.

If $B=\PP^1$, it follows that one general fiber of $f$ and all but one irreducible
component of each singular fiber of $f$ freely generate $\Pic(X)$. Thus, $\rho(X)=\rank_{\Z}\Pic(X)\ge 1$, which
contradicts the assumption that $\rho(X)=0$. Hence $B$ must be affine, proving~(1).

Now let $B$ be a smooth affine rational curve. From the above description of the free
generators of $\Pic(V)$, it follows that all but one irreducible component of each
singular fiber of $f$ freely generate $\Pic(X)$. Therefore, $\rho(X)=\rank_{\Z}\Pic(X)\ge 1$ whenever $f$ has a reducible fiber. This would again contradict the assumption that $\rho(X)=0$. Hence, all fibers of $f$ are irreducible, proving~(2).
\end{proof}

Now we will quote two useful results from~\cite{JSXZ2024}.

\begin{lem}[{cf.~\cite[Lemma~2.6, Corollary~2.7]{JSXZ2024}}]
Let $X$ be a smooth quasi-projective surface and $\pi\colon X\to B$ a $\G_m$-bundle over a smooth algebraic curve $B$. Then the following holds.
\begin{enumerate}[\indent\rm(1)]
\item There exists a finite \'etale cover $B'\to B$ of degree at most~$2$ inducing a
finite \'etale cover $X':=X\times_B B'\to X$ such that $X'\to B'$ is an untwisted
$\G_m$-bundle.
\item The logarithmic Kodaira dimensions satisfy
\[
\bar\kappa(X')=\bar\kappa(X)=\bar\kappa(B)=\bar\kappa(B').
\]
\item If $\pi$ is untwisted and $B$ is a smooth affine rational curve, then $\pi$ is
a trivial $\G_m$-bundle.
\item If $X$ is a smooth affine surface and $\pi$ is a $\G_m$-bundle, then $B$ is
affine.
\end{enumerate}
\end{lem}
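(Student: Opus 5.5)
Since $\pi\colon X\to B$ is a $\G_m$-bundle, it is locally trivial in the Zariski topology; this is Hilbert~90 for $\G_m$. Hence it is the complement of the zero section in a line bundle $L\to B$. Equivalently, it is the complement of two disjoint sections in the $\PP^1$-bundle $\PP(\mathcal O\oplus L)$. I will use the language of $\PP^1$-bundles with a boundary consisting of two disjoint sections, allowing those sections to be interchanged by monodromy (the \emph{twisted} case). More precisely, the general object is a $\PP^1$-bundle $P\to B$ together with an étale double cover $D\to B$ embedded in $P$, and $X=P\setminus D$.

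\textbf{Part (1).} The étale double cover is $B':=D$ if $D$ is connected, and $B':=B$ otherwise. Pulling back along $B'\to B$ splits $D\times_B B'$ into two disjoint sections. Therefore $X'=X\times_B B'$ is the complement of two disjoint sections in a $\PP^1$-bundle, i.e.\ an untwisted $\G_m$-bundle, namely a line bundle minus its zero section. The morphism $X'\to X$ is finite étale of degree at most $2$ because it is a base change of $B'\to B$.

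\textbf{Part (2).} Since $X'\to X$ and $B'\to B$ are finite étale, Iitaka's theorem gives $\bar\kappa(X')=\bar\kappa(X)$ and $\bar\kappa(B')=\bar\kappa(B)$. So it suffices to prove $\bar\kappa(X')=\bar\kappa(B')$ in the untwisted case. Here I would use Kawamata's addition theorem for fibrations over curves: the general fiber $\G_m$ has $\bar\kappa=0$, so $\bar\kappa(X')\ge\bar\kappa(B')$. The reverse inequality comes from the free $\G_m$-action on $X'$. One can write $\Omega^2_{\bar X}(\log D)$ explicitly on $P=\PP(\mathcal O\oplus L)$ with $D=S_0+S_\infty$. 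Then $K_P+S_0+S_\infty=\varphi^*(K_{\bar B})$ by adjunction, since the relative log canonical divisor is trivial for $\PP^1$ minus two points. Adding the log boundary over $\bar B\setminus B$ gives
\[
K_{\bar P}+\bar D=\varphi^*\bigl(K_{\bar B}+(\bar B\setminus B)\bigr),
\]
and so the log plurigenera of $X'$ and $B'$ coincide. I expect this computation to be the main technical point. One has to choose the completion $\bar P$ over $\bar B$ so that $\bar D$ is SNC, and the fibers over $\bar B\setminus B$ must be included in the boundary.

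\textbf{Part (3).} If $\pi$ is untwisted, then $X$ is $L\setminus(\text{zero section})$. If moreover $B$ is a smooth affine rational curve, then $B=\PP^1$ minus at least one point, so $\Pic(B)=0$. Hence $L$ is trivial and $X\cong B\times\G_m$.

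\textbf{Part (4).} Suppose $B$ is projective. Then, since $X$ is affine, $\mathcal O(X)$ separates points. On the other hand $\mathcal O(X)=\bigoplus_n H^0(B,L^n)$, the decomposition into $\G_m$-weights in the untwisted case. For this ring to separate the points of a fiber, and also points in distinct fibers, one needs both some $L^n$ with $n>0$ and some with $n<0$ to have sections, with $\deg L^n\ge 0$. That forces $\deg L=0$ with $L$ torsion. Then $X$ contains the complete curves given by the closures of orbits of torsion sections (an orbit of the section $s$ of $L^m$, i.e.\ the curve $\{v:v^m=s\}$), which is finite over $B$ and hence complete. This contradicts affineness. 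In the twisted case, pass to $X'$, which is affine since it is finite over $X$. If $B$ were projective, $B'$ would be projective too, which is impossible by the untwisted case. Hence $B$ is affine.
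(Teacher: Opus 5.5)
Parts (1)--(3) are fine. In (2), your identity $K_{\bar P}+\bar D=\varphi^*\bigl(K_{\bar B}+(\bar B\setminus B)\bigr)$ together with $\varphi_*\mathcal O_{\bar P}=\mathcal O_{\bar B}$ already gives equality of all log plurigenera, so Kawamata's inequality is not needed.

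Part (4), however, has a genuine gap. You claim that $\mathcal O(X)=\bigoplus_n H^0(B,L^n)$ can separate points only if $L^n$ has sections for some $n>0$ and some $n<0$. This is false. Suppose $B$ is projective and $\deg L\neq 0$:
\begin{itemize}
\item[(a)] Then $H^0(B,L^n)\neq 0$ only for $n$ of one sign, and $\mathcal O(X)$ is the section ring of whichever of $L^{\pm1}$ is ample.
\item[(b)] So $X$ is the affine cone over $B$ with its vertex removed. This is quasi-affine, so its regular functions do separate points. For instance, on a fibre they restrict to monomials $c\,t^{n}$ with $n$ of a single sign, and two consecutive powers already separate points of $\G_m$.
\end{itemize}
Hence your criterion never excludes $\deg L\neq 0$, and the conclusion ``$\deg L=0$ and $L$ torsion'' is unjustified. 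Your complete-curve argument only disposes of the torsion case. Separation of points is too weak a consequence of affineness here; what actually fails for the punctured cone is that $X\to\Spec\mathcal O(X)$ misses the vertex.

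A uniform repair is as follows. Pass to the untwisted $X'$, which is affine since it is finite over $X$. If $B'$ were projective, then $\PP(\mathcal O\oplus L)\to B'$ would be a smooth projective completion of $X'$ with disconnected boundary $S_0\sqcup S_\infty$. But an affine surface has only one end, because $H^1_c(X';\Z)\cong H_3(X';\Z)=0$ by Andreotti--Frankel. So its boundary in any smooth completion is connected, a contradiction. This handles every degree of $L$ at once.

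A smaller point: your opening sentence contradicts the rest of your argument. Hilbert~90 applies to $\G_m$-torsors. If every $\G_m$-bundle here were Zariski-locally trivial and hence a line bundle minus its zero section, statement (1) would be vacuous. The bundles in this lemma have structure group $\Aut(\G_m)=\G_m\rtimes\Z/2\Z$ and are only \'etale-locally trivial. For example, $(\C^*\times\C^*)/\langle(x,y)\mapsto(-x,y^{-1})\rangle\to\C^*$, $[x,y]\mapsto x^2$, is twisted.

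The framework you then adopt is the right one: a $\PP^1$-bundle $P\supset X$ with $P\setminus X$ an \'etale double cover of $B$. It does need a sentence of justification:
\begin{itemize}
\item[(i)] $\Aut(\G_m)$ is the stabiliser of $\{0,\infty\}$ in $PGL_2$, which gives the $\PP^1$-bundle $P$.
\item[(ii)] By Tsen's theorem, $P=\PP(\mathcal E)$ for a rank-$2$ vector bundle $\mathcal E$.
\item[(iii)] Two disjoint sections then split $\mathcal E$. This is what justifies the ``line bundle minus zero section'' description you use in (2)--(4).
\end{itemize}
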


\begin{lem}[{cf.~\cite[Lemma~2.14]{JSXZ2024}}]
\label{Lem: A^1-bundle over affine rational curve is trivial}
Any $\A^1$-bundle over a smooth affine rational curve is trivial.
\end{lem}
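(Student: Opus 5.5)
We prove the statement: any $\A^1$-bundle $p\colon X\to B$ over a smooth affine rational curve $B$ is trivial. The plan has two steps. First, show that $p$ is a line bundle, i.e.\ that the structure group reduces from $\Aut(\A^1)=\mathbb{G}_a\rtimes\G_m$ to $\G_m$. Second, show that line bundles on $B$ are trivial. Write $B=\PP^1\setminus\{q_1,\dots,q_r\}$ with $r\ge1$, so that $B$ is an open subset of $\A^1$.

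\emph{Step 1: existence of a section.} Choose a Zariski open cover $\{U_i\}$ of $B$ with trivializations $p^{-1}(U_i)\cong U_i\times\A^1$. The transition functions have the form $t\mapsto a_{ij}t+b_{ij}$, where $a_{ij}\in\mathcal{O}(U_{ij})^*$ and $b_{ij}\in\mathcal{O}(U_{ij})$. The cocycle $(a_{ij})$ defines a line bundle $L$ on $B$. The $b_{ij}$ then form a $1$-cocycle with values in $L$, twisted appropriately. The obstruction to finding a section of $p$ (equivalently, to conjugating all transition functions into the linear form $t\mapsto a_{ij}t$) therefore lies in $H^1(B,L)$. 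Since $B$ is affine and $L$ is quasi-coherent, Serre's vanishing theorem gives $H^1(B,L)=0$. So $p$ admits a section $s$. Using $s$ as the origin in each fibre, the translation parts $b_{ij}$ can be made zero, which identifies $X$ with the total space of $L$.

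\emph{Step 2: $L$ is trivial.} It suffices to show $\Pic(B)=0$. The ring $\mathcal{O}(B)$ is a localization of $\C[x]$, which is a principal ideal domain. Localizations of a PID are again PIDs, so $\mathcal{O}(B)$ is a PID and $\Pic(B)=0$. Alternatively, $\Pic(\PP^1)=\Z$ is generated by the class of a point, and removing $r\ge1$ points kills this generator. Hence $L\cong\mathcal{O}_B$ and $X\cong B\times\A^1$.

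The only delicate point is the cocycle bookkeeping in Step 1, which reduces the affine structure group to the linear one. An alternative is to construct a section directly: take a rational section of $p$ and clear its poles using the affineness of $B$. The argument is otherwise standard.
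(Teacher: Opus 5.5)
Your proof is correct. The paper gives no argument of its own for this lemma; it is quoted from \cite[Lemma~2.14]{JSXZ2024}, so there is no in-paper route to compare against. What you wrote is the standard self-contained proof.

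In cohomological terms, an $\A^1$-bundle over a reduced base is a torsor under $\Aut(\A^1)=\mathbb{G}_a\rtimes\G_m$. The sequence $1\to\mathbb{G}_a\to\mathbb{G}_a\rtimes\G_m\to\G_m\to 1$ shows that its class is determined by $[L]\in\Pic(B)$ together with a class in $H^1(B,L)$. Serre vanishing kills the second of these (your Step~1), and $\Pic(B)=0$ kills the first (your Step~2).

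Two small remarks:
\begin{itemize}
\item Step~1 uses only that $B$ is affine, so every $\A^1$-bundle over any affine curve is the total space of a line bundle. Rationality enters only through $\Pic(B)=0$.
\item Your opening step presupposes Zariski-local triviality. This is the standard meaning of ``$\A^1$-bundle'', and it would also follow from \'etale-local triviality because $\mathbb{G}_a\rtimes\G_m$ is a special group. In the paper's application, in the $\bar\kappa(Y)=-\infty$ case, getting from an $\A^1$-fibration with no multiple or reducible fibres to a Zariski-locally trivial bundle is a separate known fact. The lemma itself does not need to supply it.
\end{itemize}
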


Now we list certain results that are used repeatedly in the context of proper descent of algebraic varieties.

\begin{lem}\label{Lem: Finite morphism basic properties}
Let $f\colon V\to W$ be a finite surjective morphism between smooth, irreducible
complex algebraic varieties. Then the following holds.
\begin{enumerate}[\indent\rm(1)]
\item $\bar\kappa(V)\ge \bar\kappa(W)$. Moreover, if $f$ is \'etale, then
$\bar\kappa(V)=\bar\kappa(W)$ \textup{(cf.~\cite{Iit1982})}.
\item The induced homomorphism
\[
f_{\ast}: H_i(V;\Q)\to H_i(W;\Q)
\]
is surjective for all $i$. In particular, $b_i(V)\ge b_i(W)$ for all $i$
\textup{(cf.~\cite{Gie1964})}.
\item There is an induced injective homomorphism (by the projection formula)
\[
f^{\ast}\otimes \mathrm{Id}_{\Q}\colon
\Pic(W)\otimes_{\Z}\Q\to \Pic(V)\otimes_{\Z}\Q.
\]
In particular, $\rho(W)\le \rho(V)$.\\
\end{enumerate}
\end{lem}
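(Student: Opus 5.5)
The three assertions of the lemma are largely independent, so I would prove them one at a time, in each case reducing to a standard fact.

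For (1), I would take smooth completions with SNC boundaries and resolve indeterminacy so that $f$ extends to a morphism $\bar f\colon \bar V\to\bar W$ with $\bar f^{-1}(D_W)\subseteq D_V$. Here $D_V=\bar V\setminus V$ and $D_W=\bar W\setminus W$. This is possible because $f$ is finite, hence proper, so $\bar f^{-1}(W)=V$ and the boundary maps into the boundary. Next I would invoke the logarithmic ramification formula
\[
K_{\bar V}+D_V=\bar f^{\ast}(K_{\bar W}+D_W)+R_f,
\]
where $R_f\ge 0$ is an effective divisor. Its boundary components have non-negative coefficients, since log forms pull back to log forms. This gives $h^0(m(K_{\bar V}+D_V))\ge h^0(m(K_{\bar W}+D_W))$, because pullback of sections is injective for a dominant map. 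Hence $\bar\kappa(V)\ge\bar\kappa(W)$.

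For the \'etale case, the hypothesis together with the absence of ramification along the boundary would force $R_f=0$ up to exceptional terms. That is not automatic: $f$ may still ramify along boundary components. So I would instead use Iitaka's theorem for finite \'etale covers, whose proof goes via the Galois closure and the trace/norm of log pluricanonical forms. If $V\to W$ is Galois \'etale with group $G$, then the $G$-invariant log pluricanonical forms on $V$ are exactly those pulled back from $W$, and a norm argument shows that the Iitaka dimensions agree. A general finite \'etale map is dominated by a Galois one, and I would reduce to that case by using the inequality in both directions. This is the step I expect to require citation, namely \cite{Iit1982}, rather than a full proof.

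For (2), I would use the transfer map. Since $f$ is finite surjective between normal varieties of degree $d$, there is a transfer $f^{!}\colon H_i(W;\Q)\to H_i(V;\Q)$, obtained, for instance, by pulling back chains via the finite branched covering structure, or dually via Poincar\'e duality on compact supports. It satisfies $f_\ast\circ f^{!}=d\cdot\mathrm{id}$. Rationally, multiplication by $d$ is invertible, so $f_\ast$ is surjective and $b_i(V)\ge b_i(W)$. The existence of the transfer for branched covers is the technical point, and I would cite \cite{Gie1964}.

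For (3), I would define the norm map $N\colon \Pic(V)\to\Pic(W)$. This is available because $f$ is finite and flat: $W$ is smooth and $V$ is Cohen--Macaulay, so miracle flatness applies. The norm satisfies $N(f^\ast L)=L^{\otimes d}$. Consequently, if $f^\ast L$ is torsion, then $L^{\otimes d}$ is torsion, and $f^\ast\otimes\Q$ is injective. Alternatively, I would argue with divisors: $f_\ast f^\ast D=dD$ on Weil divisor classes, and these agree with $\Pic$ by smoothness. Comparing ranks then gives $\rho(W)\le\rho(V)$.
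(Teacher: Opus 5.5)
Your proposal is correct. The paper gives no argument for this lemma beyond citing Iitaka, Giesecke and ``the projection formula'', so for (2) and (3) your sketches are a self-contained version of what is cited.

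\begin{itemize}
\item In (3), $f_\ast f^\ast=d$ on divisor classes, or the norm of the finite flat $f$, is exactly the projection formula.
\item In (2), the Poincar\'e-duality construction of the transfer is fully rigorous because $V$ and $W$ are smooth. Here $f^{!}$ is $f^\ast$ on $H^\ast_c$ transported by duality, and $f_\ast f^{!}=d$ follows from $f_\ast[V]=d[W]$ in Borel--Moore homology.
\item The inequality in (1), obtained by pulling back log forms, is also standard. Note that $\bar f^{-1}(D_W)\subseteq D_V$ holds for \emph{any} extension of $f$, so finiteness is not needed for the inequality. Properness is only used to get $\bar f(D_V)\subseteq D_W$, which matters in the \'etale case.
\end{itemize}

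You genuinely depart from the direct route only in the \'etale equality, and your reason for abandoning that route is mistaken. Ramification along boundary components contributes nothing to the \emph{logarithmic} ramification divisor. Suppose a component $E$ of $D_V$ maps onto a component $\{w=0\}$ of $D_W$ with $\bar f^\ast w=uz^e$. Then $\bar f^\ast(dw/w)=e\,dz/z+du/u$. Since $\bar f|_E$ is generically \'etale onto its image in characteristic $0$, the log Jacobian is a unit at a general point of $E$. For example, $z\mapsto z^2$ on $\C^\ast$, extended to $\PP^1\to\PP^1$, has $R_f=0$ even though it ramifies at $0$ and $\infty$.

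Hence for finite \'etale $f$, the divisor $R_f$ is supported on components of $D_V$ whose images have codimension $\ge 2$, i.e.\ $R_f$ is $\bar f$-exceptional. Then $\bar\kappa(V)=\kappa\bigl(\bar f^\ast(K_{\bar W}+D_W)\bigr)=\bar\kappa(W)$. To see this, discard the exceptional part using the Stein factorization of $\bar f$ and normality, then use $\kappa(\bar f^\ast L)=\kappa(L)$ for surjective $\bar f$.

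Your Galois-closure route is also valid. It uses integrality of the log pluricanonical ring of the Galois closure over its $G$-invariants. But to identify those invariants with the log pluricanonical forms of $W$, you must check that $G$-invariant forms descend with at most log poles along $D_W$. That is the very same local computation $\bar f^\ast(dw/w)=e\,dz/z+\cdots$ that you thought was an obstruction. So the direct argument is shorter, and the Galois detour gains nothing.
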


\subsection{$3$-manifold at infinity and $\pi_1^\infty$ of normal affine surfaces}\label{subsec:fundamental-group-at-infinity}

Let $X$ be a normal affine surface over $\C$. By resolution of singularities, there exists a smooth projective surface $V$ containing $X$ as a dense Zariski open subset such that $D:=V\setminus X$ is a connected simple normal crossing divisor (SNC divisor, for short). The pair $(V,D)$ is called a \emph{log-smooth completion} of $X$.

Regard $V$ as a complex analytic manifold equipped with its Euclidean topology. For a sufficiently small positive real number $\varepsilon$, let $N_\varepsilon(D)$ denote the tubular neighbourhood of $D$ consisting of all points whose distance from $D$ is at most $\varepsilon$. Then $D$ is a strong deformation retract of $N_\varepsilon(D)$, and the boundary $M_X:=\partial N_\varepsilon(D)$ is a compact connected orientable $\mathscr{C}^{\infty}$-smooth $3$-manifold. Moreover, $M_X$ is a strong deformation retract of the punctured tubular neighbourhood $N_\varepsilon(D)\setminus D$. The manifold $M_X$ is called the \emph{$3$-manifold at infinity} of $X$.

Mumford \cite{Mum1961} showed that $M_X$ is naturally realized as the plumbing manifold associated with the weighted dual graph of the boundary divisor $D$. In particular, $M_X$ is a \emph{graph manifold} in the sense of Waldhausen \cite{Wal1967}. We refer the reader to Eisenbud and Neumann \cite{EN1985} for a detailed treatment of graph manifolds, their plumbing descriptions, and their relation to the JSJ decomposition.

Recall that a connected $3$-manifold $M$ is called \emph{prime} if, whenever $M=M_1\#M_2$, one of the summands $M_1$ or $M_2$ is homeomorphic to the $3$-sphere $S^3$. It is called \emph{irreducible} if every smoothly embedded $2$-sphere in $M$ bounds a $3$-ball. By a theorem of W.~D.~Neumann \cite[Theorem~5.1]{Neu1989}, the $3$-manifold at infinity $M_X$ is prime. Furthermore, every connected orientable prime $3$-manifold is either irreducible or homeomorphic to $S^2\times S^1$ (see \cite[Lemma~3.13]{Hem1976}).

The \emph{fundamental group at infinity} of $X$ is defined by
$\pi_1^\infty(X):=\pi_1(M_X)$, or equivalently, as the fundamental group of the boundary of a sufficiently small tubular neighbourhood of the boundary divisor $D$. This group is independent of the choice of $\varepsilon$, provided that $\varepsilon$ is sufficiently small, and also independent of the chosen log-smooth completion (see, for example, \cite[before Theorem~1.2.11]{GMM2021}).

The fundamental group at infinity admits an intrinsic topological description in terms of complements of compact subsets of $X$. Indeed, let $K_1\subseteq K_2\subseteq\cdots$ be an exhaustion of $X$ by compact subsets such that $X=\bigcup_{i=1}^{\infty}K_i$. Then, for every $i\leq j$, the inclusion
$\eta_{ij}\colon X\setminus K_j\hookrightarrow X\setminus K_i$
induces a homomorphism
$(\eta_{ij})_\ast\colon\pi_1(X\setminus K_j)\to\pi_1(X\setminus K_i)$.
Moreover, $\eta_{ii}=\id_{X\setminus K_i}$ for each $i$, and $\eta_{ik}=\eta_{ij}\circ\eta_{jk}$ for all $i\le j \le k$. Thus, $\{(\pi_1(X\setminus K_i))_{i\in \N},((\eta_{ij})_\ast)_{i\le j \in \N}\}$ forms an inverse system of groups and transition morphisms over the poset $(\N, \subseteq)$, and
\[
\pi_1^\infty(X)\cong\varprojlim_i\pi_1(X\setminus K_i).
\]
This inverse limit is independent of the chosen exhaustion of $X$.

\subsubsection{Mumford's presentation for rational tree boundary divisors}

We recall a presentation of the fundamental group at infinity due to Mumford \cite[\S 1.2]{Mum1961} (see also \cite[\S 1.3.9]{GMM2021}). More generally, Mumford described the fundamental group of the boundary of a tubular neighbourhood of a compact, connected divisor with simple normal crossings on a $2$-dimensional complex manifold, provided that its dual graph has no non-trivial loops. The presentation depends on the intersection matrix of the divisor and the genera of its irreducible components. Since we shall only need the case where every irreducible component of the divisor is a smooth rational curve, we record only this special case.

Let $X$ be a normal affine surface, and let $(V, D)$ be a log-smooth completion of $X$. Assume that the connected boundary divisor $D=D_1+\cdots+D_n$ has smooth rational irreducible components and that its dual graph $\Gamma=\Gamma(D)$ is a tree. Let $s_{ij}:=(D_i\cdot D_j)$ denote the entries of the intersection matrix of $D$. Note that $s_{ij}=0 \text{ or } 1$ if $i\neq j$ and $s_{ii}<0$ for all $1\le i,j\le n$. For each irreducible component $D_i$, let $e_i$ denote the corresponding meridian generator of $\pi_1^\infty(X)$. If $\varepsilon_i$ is the vertex of $\Gamma$, corresponding
to $D_i$ then we shall say that $e_i$ is the generator of $\pi_1^\infty(X)$ corresponding to $\varepsilon_i$.

\begin{prop}[Mumford, cf.~\cite{Mum1961}]\label{prop: Mumford's presentation for tree}
With the above notation,
\[
\pi_1^\infty(X)=
\Bigl\langle
e_1,\ldots,e_n
\ \Big|\
\prod_{j=1}^{n}e_j^{\,s_{ij}}=1
\ \text{for each }i=1,\ldots,n,
\ \text{and }\
[e_i,e_j]=1
\ \text{whenever }D_i\cap D_j\neq\varnothing
\Bigr\rangle.
\]
\end{prop}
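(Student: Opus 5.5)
This presentation is a statement about the plumbed $3$-manifold $M_X=\partial N_\varepsilon(D)$, so I would prove it by a van Kampen argument on Mumford's plumbing description of $M_X$. First I decompose the regular neighbourhood $N_\varepsilon(D)$ into pieces $N_i$, where each $N_i$ is a closed disc bundle over $D_i\cong\PP^1$ of Euler number $s_{ii}$. Two pieces $N_i$ and $N_j$ are glued by plumbing along a bidisc exactly when $s_{ij}=1$. Correspondingly, $M_X$ is a union of pieces $M_i$. Each $M_i$ is the circle bundle over $D_i$ restricted to $D_i$ minus $d_i$ open discs, where $d_i=\sum_{j\neq i}s_{ij}$ is the valence of $\varepsilon_i$ in $\Gamma$. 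Two pieces $M_i$ and $M_j$ meet along a torus $T_{ij}$ whenever $s_{ij}=1$, and on $T_{ij}$ the roles of fibre and base circle are interchanged.

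The local computation comes next. Since $D_i$ minus $d_i$ discs is a planar surface, $M_i$ is a trivial circle bundle over it, so
\[
\pi_1(M_i)\cong\Z\times F_{d_i-1}.
\]
The central $\Z$ factor is generated by the fibre $e_i$, which is the meridian of $D_i$. For the base I take the boundary circles $c_{ij}$ of a section over the punctured sphere. These satisfy one relation, namely that $\prod_j c_{ij}$ equals the obstruction $e_i^{-s_{ii}}$; this is where the Euler number $s_{ii}$ enters. The plumbing identification on $T_{ij}$ sends $c_{ij}$ to the fibre $e_j$ of the neighbouring piece, and sends the fibre $e_i$ to $c_{ji}$. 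Hence every generator can be rewritten in terms of the meridians $e_1,\dots,e_n$. The relation of the punctured-sphere group then becomes
\[
e_i^{s_{ii}}\prod_{j\ne i}e_j^{s_{ij}}=1,
\]
and centrality of $e_i$ in $\pi_1(M_i)$ gives $[e_i,e_j]=1$ whenever $D_i\cap D_j\neq\varnothing$. The product can be written in this order because the factors involved all commute with $e_i$; the neighbours themselves need not commute with each other.

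Finally I glue by iterated van Kampen along the edges of the tree $\Gamma$. Because $\Gamma$ has no cycles, no HNN-type extra generators appear. The amalgamations over the tori $\Z^2=\pi_1(T_{ij})$ then only identify $c_{ij}$ with $e_j$ and $e_i$ with $c_{ji}$. Collecting the generators and relations over all vertices yields exactly the stated presentation.

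I expect the main obstacle to be orientation and sign bookkeeping. I must check that the boundary relation gives exponent $+s_{ii}$ and not $-s_{ii}$, and that the plumbing map sends $c_{ij}$ to $e_j$ and not $e_j^{-1}$. I would fix this by a consistent choice of orientations, following Mumford \cite{Mum1961}. As a check I would test the formula on a single $(+1)$-curve in $\PP^2$, where $M_X=S^3$ and the group must be trivial, and on the chain $[0,0]$.
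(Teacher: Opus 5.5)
Your proposal is correct and is essentially Mumford's own argument: you decompose $M_X$ into trivial circle bundles over punctured spheres and then apply van Kampen iteratively along the tree, and the paper simply cites that argument without reproducing it. When you write it up, tighten two points. First, the loops $c_{ij}$ identified with $e_j$ should be the boundaries of the local disc sections at $D_i\cap D_j$, not the boundary circles of a global section over the punctured sphere; the latter have boundary product $1$, and $e_i^{\pm s_{ii}}$ arises as the total twist between the two. Second, the fixed order $j=1,\dots,n$ of the non-commuting neighbours in each relation is obtained by choosing the base arcs in each planar piece suitably, which the tree structure lets you do independently at every vertex.
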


\subsubsection{Wagreich's presentation for rational unicyclic boundary divisors}

We now recall a generalization of Mumford's presentation due to Wagreich
\cite[Proposition~2.2]{Wag1971}. More generally, Wagreich described the
fundamental group at infinity of a smooth affine surface whose boundary divisor
has a dual graph containing a unique cycle. Since this is the only case needed
later, we record the corresponding presentation.

Let $X$ be a smooth affine surface, and let $(V,D)$ be a log-smooth completion
of $X$. Assume that the dual graph $\Gamma=\Gamma(D)$ has a unique cycle and no
centres. Choose a vertex $\varepsilon_0$ on the unique cycle of $\Gamma$.
Let $\Gamma_1,\ldots,\Gamma_s$ be the connected components of
$\Gamma-\{\varepsilon_0\}$, and assume that $\Gamma_1$ contains the unique
cycle. Let $\varepsilon_1,\ldots,\varepsilon_r$ be the vertices of $\Gamma_1$ and let $\varepsilon_{r+1},\ldots,\varepsilon_t$ denote the remaining vertices of $\Gamma$. For each vertex $\varepsilon_i$, let $D_i$ be the corresponding irreducible component of $D$, let $e_i$ denote the corresponding meridian generator of $\pi_1^\infty(X)$, and write $s_{ij}:=(D_i\cdot D_j)$ for all $0\le i,j\le t$. 

\begin{prop}[Wagreich, {cf. \cite[Proposition~2.2]{Wag1971}}]
\label{prop:Wagreich-cycle-presentation}
With the above notation, the fundamental group at infinity $\pi_1^\infty(X)$ is a free group on the generators $e_0,e_1,\ldots,e_t,u$
subject to the relations
\[
B_0:\quad
e_0^{s_{0,0}}
e_1^{s_{0,1}}
\cdots
e_{r-1}^{s_{0,r-1}}
u^{-1}e_ru
e_{r+1}^{s_{0,r+1}}
\cdots
e_t^{s_{0,t}}
=1,
\]
\[
B_i:\quad
e_0^{s_{i,0}}
\cdots
e_t^{s_{i,t}}
=1,
\qquad i\neq 1,r,
\]
\[
B_r:\quad
ue_0u^{-1}
e_1^{s_{r,1}}
\cdots
e_t^{s_{r,t}}
=1,
\]
\[
A_{i,j}:\quad
e_ie_j=e_je_i,
\quad \text{whenever } \varepsilon_i \text{ is joined to } \varepsilon_j; \ (i,j)\neq(0,r) \text{ or } (r,0),
\]
\[
A_{0,r}:e_0u^{-1}e_ru=u^{-1}e_rue_0.
\]
\end{prop}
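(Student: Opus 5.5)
The plan is to deduce the presentation from Mumford's tree case (Proposition~\ref{prop: Mumford's presentation for tree}) by cutting the unique cycle of $\Gamma$ at the edge joining $\varepsilon_0$ and $\varepsilon_r$, and then applying the HNN form of the Seifert--van Kampen theorem. By Mumford's plumbing description, $M_X$ is obtained by gluing pieces $P_i$, one for each vertex $\varepsilon_i$. Each $P_i$ is the restriction of the circle bundle of degree $s_{ii}$ over $D_i\cong\PP^1$ to the complement of small discs around the points $D_i\cap D_j$. For every edge $\varepsilon_i\varepsilon_j$, the pieces $P_i$ and $P_j$ are glued along a torus $T_{ij}$ by the map that swaps fibre and base-boundary circles.

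The fundamental group of $P_i$ is generated by the fibre class $e_i$, which is central, together with the boundary loops. Each boundary loop at an edge $\varepsilon_i\varepsilon_j$ is identified, after gluing, with the fibre class $e_j$ of the neighbouring piece. The relation coming from the total space of the circle bundle over the punctured sphere is exactly $\prod_j e_j^{s_{ij}}=1$, and the torus $T_{ij}$ yields the commutation relation $[e_i,e_j]=1$.

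First remove the edge $\varepsilon_0\varepsilon_r$, i.e.\ cut $M_X$ along $T_{0r}$. Since $\Gamma$ has a unique cycle, the resulting graph $\Gamma'$ is a tree. The cut manifold $M'$ is a plumbing over $\Gamma'$ in which $P_0$ and $P_r$ each carry one extra free boundary torus. Repeating Mumford's van Kampen computation over the tree $\Gamma'$ gives a presentation of $\pi_1(M')$ with generators $e_0,\dots,e_t$. In this presentation all relations $B_i$ and $A_{i,j}$ hold for $(i,j)\neq(0,r),(r,0)$, except that the relations at $\varepsilon_0$ and $\varepsilon_r$ are missing the factor contributed by the removed edge. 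Concretely, the extra boundary loop of $P_0$ appears as an unidentified element, and so does the extra boundary loop of $P_r$.

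Regluing $M'$ along $T_{0r}$ is a self-gluing, so van Kampen in HNN form gives
\[
\pi_1(M_X)=\pi_1(M')*_{\mathbb{Z}^2},
\]
with a stable letter $u$, represented by a loop running once around the cycle. The letter $u$ conjugates the image of $\pi_1(T_{0r})\cong\Z^2$ on the $P_0$ side to its image on the $P_r$ side. With basepoint and path choices made consistently, the gluing identifies the extra boundary loop of $P_0$ with $u^{-1}e_ru$, and the extra boundary loop of $P_r$ with $ue_0u^{-1}$. Substituting these into the incomplete vertex relations yields exactly $B_0$ and $B_r$. The commutation of the two generators of $\pi_1(T_{0r})$ yields $A_{0,r}$, in the form $e_0\,u^{-1}e_ru=u^{-1}e_ru\,e_0$.

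The main obstacle I expect is bookkeeping rather than any conceptual step. One issue is choosing basepoints and connecting paths in the maximal tree $\Gamma'$ so that the meridians $e_i$ are well defined and the conjugations by $u$ land exactly as stated. Another is ordering the factors in each product $\prod_j e_j^{s_{ij}}$, which matters at $\varepsilon_0$ and $\varepsilon_r$ because $u^{-1}e_ru$ need not commute with the other factors. The hypothesis that $\Gamma$ has no centres is needed so that each vertex has at most two neighbours along the relevant chain. This lets the product relation at every vertex be written in the linear order given above, matching Wagreich's \cite[Proposition~2.2]{Wag1971}. I would verify the orientation and sign conventions of the fibre/base swap on $T_{0r}$ against Mumford's tree case, to confirm that the exponents $s_{0r}=s_{r0}=1$ enter with the correct sign.
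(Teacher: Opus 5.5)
The paper gives no proof of this proposition; it is quoted from \cite[Proposition~2.2]{Wag1971} and used only to compute $\pi_1^\infty(\C^*\times\C^*)$. Your argument is the standard van Kampen derivation, and it is correct.

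Here is the argument in brief. Cutting along $T_{0r}$ leaves a connected plumbing $M'$ over the tree $\Gamma'$. Mumford's computation presents $\pi_1(M')$ on $e_0,\dots,e_t$ together with the two free boundary loops $c_0\subset\partial P_0$ and $c_r\subset\partial P_r$. The relations are the tree relations, the vertex relations at $\varepsilon_0$ and $\varepsilon_r$ (which now contain $c_0$ and $c_r$), and $[e_0,c_0]=[e_r,c_r]=1$. The self-gluing form of van Kampen, which needs no injectivity of $\pi_1(T_{0r})\to\pi_1(M')$, then adds $u$ with $c_0=u^{-1}e_ru$ and $c_r=ue_0u^{-1}$. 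Eliminating $c_0$ and $c_r$ gives $B_0$, $B_r$ and $A_{0,r}$; the relation $[e_r,ue_0u^{-1}]=1$ is just a conjugate of $A_{0,r}$. The statement's reference to the components of $\Gamma-\{\varepsilon_0\}$ suggests cutting at the vertex $\varepsilon_0$ instead, so that the part over $\Gamma_1$ meets $P_0$ along two tori; that is the same computation.

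Two small remarks.

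First, what you obtain is $B_i$ for all $i\neq 0,r$, and that is the intended statement. The printed condition ``$i\neq 1,r$'' is a typo: the paper's own computation for $\C^*\times\C^*$ uses $B_1$ and $B_2$ and no unmodified $B_0$.

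Second, your argument never uses the no-centres hypothesis, and the ordering issue you raise does not need it either. If a vertex relation has only two factors $a,b$ besides the central power $e_i^{s_{ii}}$, then $e_i^{s_{ii}}ab=1$ already implies $e_i^{s_{ii}}ba=1$. In general, any prescribed order can be reached by re-choosing the connecting paths. Concretely, conjugate all meridians in a branch of $\Gamma'$ hanging from $\varepsilon_i$ by an element commuting with $e_i$, and adjust $u$ accordingly. For example, let $\varepsilon_k$ be the other cycle-neighbour of $\varepsilon_0$, and replace $u$ by $ue_k^{-1}$. This moves $u^{-1}e_ru$ past $e_k$ in $B_0$, and it turns $B_r$ and $A_{0,r}$ into equivalent relations.
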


\subsubsection{A useful application} The presentations of Mumford and Wagreich provide an effective method for
computing the fundamental group at infinity directly from a log-smooth
completion. We illustrate this by computing
$\pi_1^\infty(\C^*\times\C^*)$.

\begin{prop}\label{Prop: Fundamental group at infinity of 2-torus}
The fundamental group at infinity of $\C^*\times \C^*$ is isomorphic to $\Z^3$.
\end{prop}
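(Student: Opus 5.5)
The plan is to compute $\pi_1^\infty(\C^*\times\C^*)$ from an explicit log-smooth completion, as the preceding subsection suggests, using Wagreich's presentation (Proposition~\ref{prop:Wagreich-cycle-presentation}). I would also confirm the answer topologically.

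\textbf{Choice of completion.} Compactify $\C^*\times\C^*\subseteq\PP^1\times\PP^1$. The boundary is $D=\{0\}\times\PP^1+\{\infty\}\times\PP^1+\PP^1\times\{0\}+\PP^1\times\{\infty\}$. This is an SNC cycle of four smooth rational curves, each of self-intersection $0$, and its dual graph is a $4$-cycle with no centres (no vertex of valence $\ge3$). Label the vertices cyclically $\varepsilon_0,\varepsilon_1,\varepsilon_2,\varepsilon_3$, so $r=3$ and $t=3$. Then $s_{ii}=0$, $s_{i,i\pm1}=1$, and $s_{02}=s_{13}=0$.

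\textbf{Reading off the relations.} With generators $e_0,\dots,e_3,u$, the commutation relations are $A_{0,1}$, $A_{1,2}$, $A_{2,3}$ and $A_{0,3}\colon e_0u^{-1}e_3u=u^{-1}e_3ue_0$. The relation $B_2$ reads $e_1e_3=1$. Relation $B_0$ becomes $e_1\,u^{-1}e_3u=1$, and relation $B_3$ becomes $ue_0u^{-1}e_2=1$. I would also include the relation $B_1$, which reads $e_0e_2=1$; I read the indexing exclusion in the statement as a typo and would double-check this against Wagreich. The relations then give:
\begin{itemize}
\item $e_3=e_1^{-1}$ and $e_2=e_0^{-1}$;
\item $u^{-1}e_1u=e_1$ from $B_0$ combined with $B_2$;
\item $ue_0u^{-1}=e_0$ from $B_3$ combined with $B_1$.
\end{itemize}
So $u$ commutes with $e_0$ and $e_1$, and $A_{0,1}$ makes $e_0,e_1$ commute. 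The group is therefore generated by three pairwise commuting elements $e_0,e_1,u$ with no further relations. Checking that no remaining relation imposes torsion or extra identities gives $\pi_1^\infty\cong\Z^3$.

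\textbf{Independent topological check.} As a cross-check, write $\C^*\times\C^*\simeq$ the complement of the compact set $K=\{\varepsilon\le|x|,|y|\le1/\varepsilon\}$, which is homeomorphic to $T^2\times[0,1]^2$. The boundary at infinity is $T^2\times\partial([0,1]^2)\cong T^2\times S^1=T^3$. This is a compact $3$-manifold whose collar gives the end, so $M_X\cong T^3$ and $\pi_1=\Z^3$. The same picture appears in the plumbing description: a cycle of $0$-curves plumbs to a $T^2$-bundle over $S^1$ with monodromy $\begin{pmatrix}0&-1\\1&0\end{pmatrix}^4$-type product, which is the identity.

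\textbf{Main obstacle.} The key difficulty is bookkeeping in the Wagreich presentation: the indexing conventions and which $B_i$ are replaced. To avoid relying on that, I would present the topological $T^3$ argument as the main proof, noting that $M_X$ is independent of the completion, and use the Wagreich computation as confirmation.
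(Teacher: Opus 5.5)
Your proposal is correct. The Wagreich computation you outline is exactly the paper's proof. It uses the same completion $\PP^1\times\PP^1$ and the same cyclic labelling with $r=t=3$. It has the same relations $e_1u^{-1}e_3u=1$, $e_0e_2=1$, $e_1e_3=1$, $ue_0u^{-1}e_2=1$ plus the commutators. It makes the same elimination $e_2=e_0^{-1}$, $e_3=e_1^{-1}$, leaving three pairwise commuting generators $e_0,e_1,u$. Your reading of the index range is confirmed by the paper's own computation, which uses $B_1$: the exclusion should be $i\neq 0,r$.

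Where you genuinely depart from the paper is in making the direct topological argument the main proof. Via $(x,y)\mapsto(\arg x,\arg y,\log|x|,\log|y|)$ one has $\C^*\times\C^*\cong T^2\times\mathbf{R}^2$. Take the exhaustion $K_n=\{e^{-n}\le|x|\le e^{n},\ e^{-n}\le|y|\le e^{n}\}$. Then $X\setminus K_n\cong T^2\times(\mathbf{R}^2\setminus[-n,n]^2)\simeq T^3$, and all bonding inclusions are homotopy equivalences. So the inverse-limit description in \S\ref{subsec:fundamental-group-at-infinity} gives $\Z^3$ immediately.

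This is the cleanest way to make precise your sentence ``whose collar gives the end, so $M_X\cong T^3$''. Note that it is the end, not $\C^*\times\C^*$ itself, that is the complement of $K$. Alternatively, $\{|x|\le\varepsilon\}\cup\{|x|\ge1/\varepsilon\}\cup\{|y|\le\varepsilon\}\cup\{|y|\ge1/\varepsilon\}\subseteq\PP^1\times\PP^1$ is a regular neighbourhood of $D$ whose boundary is $\partial K\cong T^3$.

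The topological route is more elementary and sidesteps the indexing conventions in Wagreich's presentation that you rightly worried about. It also proves the stronger fact $M_X\cong T^3$, not just the value of its fundamental group. Its limitation is that it exploits the global product structure of the torus. The paper's computation is instead meant to illustrate the Mumford--Wagreich presentations, which apply to boundary graphs with at most one cycle even when no such product structure is visible.
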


\begin{proof}
Let $X=\C^*\times\C^*$. Consider the standard log-smooth completion $X\hookrightarrow\PP^1\times\PP^1$. Its boundary divisor $D=D_0+D_1+D_2+D_3$ consists of four smooth rational curves forming a cycle. The dual graph $\Gamma=\Gamma(D)$ is the $4$-cycle $C_4$, and each component satisfies $(D_i^2)=0$. Clearly $\pi_1(\Gamma)\cong \Z$, and $\Gamma$ has no centers. Let $\varepsilon_i$ denote the vertex of $\Gamma$ corresponding to $D_i$, and let $e_i$ be the associated generator in the above presentation due to Wagreich for all $0\le i \le 3$. Choose $\varepsilon_0$ as the distinguished vertex in the cycle. Thus the weighted dual graph of $D$ is:
	$$
	\begin{tikzpicture}[x=0.75pt,y=0.75pt,yscale=-1,xscale=1]
	
	\draw   (302,51) .. controls (302,41.06) and (310.06,33) .. (320,33) .. controls (329.94,33) and (338,41.06) .. (338,51) .. controls (338,60.94) and (329.94,69) .. (320,69) .. controls (310.06,69) and (302,60.94) .. (302,51) -- cycle ;
	
	\draw   (241,98) .. controls (241,88.06) and (249.06,80) .. (259,80) .. controls (268.94,80) and (277,88.06) .. (277,98) .. controls (277,107.94) and (268.94,116) .. (259,116) .. controls (249.06,116) and (241,107.94) .. (241,98) -- cycle ;
	
	\draw   (364,99) .. controls (364,89.06) and (372.06,81) .. (382,81) .. controls (391.94,81) and (400,89.06) .. (400,99) .. controls (400,108.94) and (391.94,117) .. (382,117) .. controls (372.06,117) and (364,108.94) .. (364,99) -- cycle ;
	
	\draw   (303,144) .. controls (303,134.06) and (311.06,126) .. (321,126) .. controls (330.94,126) and (339,134.06) .. (339,144) .. controls (339,153.94) and (330.94,162) .. (321,162) .. controls (311.06,162) and (303,153.94) .. (303,144) -- cycle ;
	
	\draw    (308,64) -- (273,87) ;
	
	\draw    (370,112) -- (336,134) ;
	
	\draw    (369,86) -- (334,61) ;
	
	\draw    (306,136) -- (272,110) ;
	
	\draw (315.3,45) node [anchor=north west][inner sep=0.75pt]   [align=left] {0};
	
	\draw (377.2,92.2) node [anchor=north west][inner sep=0.75pt]   [align=left] {$0$};
	
	\draw (316,138) node [anchor=north west][inner sep=0.75pt]   [align=left] {$0$};
	
	\draw (254,92) node [anchor=north west][inner sep=0.75pt]   [align=left] {$0$};
	
	\draw (315.3,18) node [anchor=north west][inner sep=0.75pt]   [align=left] {$\varepsilon_0$};
	
	\draw (404.2,96) node [anchor=north west][inner sep=0.75pt]   [align=left] {$\varepsilon_1$};
	
	\draw (221,94.5) node [anchor=north west][inner sep=0.75pt]   [align=left] {$\varepsilon_3$};
	
	\draw (316,168) node [anchor=north west][inner sep=0.75pt]   [align=left] {$\varepsilon_2$};
	\end{tikzpicture}
	$$
    Applying Proposition~\ref{prop:Wagreich-cycle-presentation}, we obtain that $\pi_1^\infty(X)$ is generated by $e_0,e_1,e_2,e_3,u$ subject to the relations
    $$
	\begin{array}{ccc}
	(0) \cdots & e_1u^{-1}e_3u=1; & [e_0,e_1]=1;\\
	(1) \cdots & e_0e_2=1; & [e_1,e_2]=1;\\
	(2) \cdots & e_1e_3=1; & [e_2,e_3]=1;\\
	(3) \cdots & ue_0u^{-1}e_2=1; & [e_0,u^{-1}e_3u]=1;
	\end{array}
	$$
    From the relations (1) and (2), we obtain $e_2=e_0^{-1}$ and $e_3=e_1^{-1}$. Substituting these into the relations (0) and (3) gives
\[
e_1u^{-1}e_1^{-1}u=1,
\qquad
ue_0u^{-1}e_0^{-1}=1,
\]
that is, $[e_1,u]=1$ and $[e_0,u]=1$. Together with $[e_0,e_1]=1$, it follows that $e_0$, $e_1$, and $u$ commute pairwise. Since $e_2=e_0^{-1}$ and $e_3=e_1^{-1}$, every generator is expressed in terms
of $e_0,e_1$, and $u$. The remaining relations
\[
[e_1,e_2]=1,\qquad
[e_2,e_3]=1,\qquad
[e_0,u^{-1}e_3u]=1
\]
are then automatically satisfied. Therefore
\[
\pi_1^\infty(X)= \Bigl\langle e_0, e_1,u
\ :
[e_0,e_1]=[e_0, u]=[e_1,u]=1
\Bigr\rangle
\cong 
\Z e_0\oplus \Z e_1\oplus \Z u
\cong \Z^3.
\]
This completes the proof.
\end{proof}

\subsection{The class $\mathscr{S}_0$ of smooth affine surfaces}\label{Subsec_Class S_0}
Following Pe{\l}ka--Ra{\'z}ny \cite{PR2021}, denote by $\mathscr{S}_0$ the class of smooth affine
surfaces $S$ satisfying the following three properties:
\begin{enumerate}[\indent\rm(1)]
\item the coordinate ring $\C[S]$ is UFD;
\item $\C[S]^{\ast}=\text{the units in } \C[S]=\C^{\ast}$;
\item $\bar\kappa(S)=0$.
\end{enumerate}

The class $\mathscr{S}_0$ admits an explicit algebraic description, as given by the following theorem.

\begin{thm}[{cf. \cite[Theorem 1.1]{PR2021}}]\label{Thm: PR classification S_0}
A smooth affine surface $S\in\mathscr{S}_0$ if and only if there exist monic polynomials
$p_1,p_2\in\C[t]$ such that $S$ is isomorphic to
\[
S_{p_1,p_2}
:=\Spec \C[x_1,x_2]\Bigl[
\bigl(x_2x_1^{-1}-p_1(x_1^{-1})\bigr)x_1^{-1},
\,
\bigl(x_1x_2^{-1}-p_2(x_2^{-1})\bigr)x_2^{-1}
\Bigr].
\]

Moreover, if $p_1,p_2,p_1',p_2'\in\C[t]$ are monic, then
\[
S_{p_1,p_2}\cong S_{p_1',p_2'}
\quad\text{if and only if}\quad
\{p_1,p_2\}=\{p_1',p_2'\}.
\]
In particular, distinct pairs $\{p_1,p_2\}$ yield pairwise
non-isomorphic surfaces in\/ $\mathscr{S}_0$.
\end{thm}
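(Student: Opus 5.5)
The statement is the Pe{\l}ka--Ra{\'z}ny classification, and I would prove it in three parts: the forward direction ($S_{p_1,p_2}\in\mathscr{S}_0$), the converse, and the isomorphism criterion. Throughout I would work with an explicit log-smooth completion $(V,D)$ and read off $\C[S]^*$, $\Pic(S)$ and $\bar\kappa(S)$ from the boundary.

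\emph{Forward direction.} I would realize $S_{p_1,p_2}$ as an affine modification of $\A^2=\Spec\C[x_1,x_2]$. Adjoining $y_1=(x_2x_1^{-1}-p_1(x_1^{-1}))x_1^{-1}$ amounts to blowing up a chain of infinitely near points over the origin. These points lie along the curve germ $x_2=x_1p_1(x_1^{-1})$, with $\deg p_1+2$ steps, after which the strict transform of $\{x_1=0\}$ is removed. Symmetrically, $y_2$ corresponds to a chain governed by $p_2$ with $\{x_2=0\}$ removed.

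Compactifying $\A^2\subseteq\PP^2$ and resolving, I would get $V$ rational with boundary $D$. The boundary consists of the line at infinity, the two removed lines and the non-final exceptional curves. I would then check three things:
\begin{enumerate}[\rm(a)]
\item $D$ is connected.
\item The classes of its components form a $\Z$-basis of $\Pic(V)$. This gives $\Pic(S)=0$, hence $\C[S]$ is a UFD, and $\C[S]^*=\C^*$ because no non-trivial combination of boundary components is principal.
\item After the standard contraction of superfluous $(-1)$-curves, $D$ becomes a cycle of rational curves with $K_V+D\sim0$. Then $\bar\kappa(S)=0$ by Fujita's classification.
\end{enumerate}
Affineness would follow from a divisor supported on $D$ with positive coefficients being ample, by Nakai--Moishezon on the explicit intersection matrix.

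\emph{Converse.} Let $S\in\mathscr{S}_0$. Trivial units and factoriality say that the boundary components of any log-smooth completion freely generate $\Pic(V)$ and that $H^1(S;\Z)$ contains no free part coming from units. I would then use Fujita's structure theorem for $\bar\kappa=0$ surfaces: after passing to an almost minimal model and peeling, $K_V+D^\#\equiv0$, and $D$ is either a cycle of rational curves or one of Fujita's explicit tree types. The conditions $\rho(S)=0$ and $\C[S]^*=\C^*$ should exclude the tree types and the cases with twigs attached. Lemma~\ref{Lem: Q-factorial fibration} would help here by forcing every affine ruling of $S$ to have irreducible fibers over an affine base. This leaves a boundary cycle with exactly two maximal twigs, and their weights encode the numbers $\deg p_i+2$.

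From such a cycle I would choose two components and produce two pencils giving regular functions $x_1,x_2$. I would then show that $S\to\A^2$ is precisely the affine modification above. The coefficients of $p_i$ are recovered as the positions of the successive infinitely near centers, and monicity comes from normalizing by scaling. This reconstruction, together with the exclusion of the tree-type boundaries, is the main obstacle. I would attack it by running the inductive contraction of $(-1)$-curves in $D$ down to $\PP^2$ or a Hirzebruch surface and tracking where each blow-up center lies.

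\emph{Isomorphism criterion.} For ``if'', swapping $x_1\leftrightarrow x_2$ interchanges $p_1$ and $p_2$. For ``only if'', I would use the uniqueness of the minimal normal completion up to elementary transformations. The weighted boundary cycle, together with the cross-ratio data of the blow-up centers along each twig, is an invariant of $S$. These data determine the unordered pair $\{p_1,p_2\}$.
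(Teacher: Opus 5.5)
The paper gives no proof of this statement; it is quoted from Pe{\l}ka--Ra{\'z}ny. Your sketch therefore has to stand on its own, and it breaks at its first concrete step. Your description of adjoining $y_1$ (``$\deg p_1+2$ infinitely near points over the origin along $x_2=x_1p_1(x_1^{-1})$'') is right only for $\deg p_1=0$. For $\deg p_1=1$ the centre has length $2$ and sits at $(0,1)$. For $n=\deg p_1\ge 2$ there is no centre at all, since $x_1p_1(x_1^{-1})$ has a pole at $x_1=0$. Indeed, monicity lets you write $x_1^{n}p_1(x_1^{-1})=1+x_1r(x_1)$ with $r\in\C[x_1]$. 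Then $x_1^{n+1}y_1=x_1^{n-1}x_2-1-x_1r(x_1)$, i.e.
\[
x_1\bigl(x_1^{n-2}x_2-r(x_1)-x_1^{n}y_1\bigr)=1,
\]
so adjoining $y_1$ just inverts $x_1$. Hence $S_{p_1,p_2}\cong\G_m^2\notin\mathscr{S}_0$ whenever $\deg p_1,\deg p_2\ge2$, and all such pairs give the same surface. For $p_1=p_2=1$, the function $x_2/x_1=1+x_1y_1$ is a non-constant unit with inverse $1+x_2y_2$. So the displayed formula cannot be right, and both the forward direction and the uniqueness claim are false as stated; carrying out your step (b) would have exposed this. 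Presumably $x_2x_1^{-1}$ should read $x_2x_1^{-\deg p_1}$, and symmetrically for $y_2$. That is, $y_1=\bigl(x_2-q_1(x_1)\bigr)/x_1^{\deg p_1+1}$ with $q_1(t)=t^{\deg p_1}p_1(t^{-1})$ and $q_1(0)=1$. This amounts to $\deg p_1+1$ blow-ups starting at $(0,1)$ along $x_2=q_1(x_1)$, with the last exceptional curve minus one point added to $\G_m^2$. For this corrected family your (a) and (b) do go through.

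Even then, step (c) fails as soon as, say, $\deg p_1\ge1$. The first $\deg p_i$ curves of the $i$-th chain remain in $D$ as a twig of $(-2)$-curves attached to the strict transform of the corresponding axis, which becomes a branching component. Moreover, no SNC completion has $K_V+D\sim0$. To see this, note that $dx_1\wedge dx_2/(x_1x_2)$ vanishes to order $\deg p_i$ along the $i$-th added line. So the nowhere vanishing $2$-forms on $S$ are exactly $c\,x_1^{-\deg p_1}x_2^{-\deg p_2}\,dx_1\wedge dx_2/(x_1x_2)$, and these have a pole of order $\deg p_1+1\ge2$ along $\{x_1=0\}$. By contrast, $K_V+D\sim0$ with $D$ SNC would produce such a form with at most simple poles along every divisor over $V$ with centre in $D$. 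What is true is that on the blown-up $\PP^1\times\PP^1$, $K_V+D$ is linearly equivalent to an effective divisor supported on the exceptional curves, whose multiples have only one section; this is what gives $\bar\kappa=0$.

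The same twigs undercut your converse, where you first propose to exclude boundaries with twigs and then keep exactly two. Finally, the part you call the main obstacle is the actual content of the theorem. One must show that every $S\in\mathscr{S}_0$ is $\G_m^2$ with two such chains attached along $\{x_1=0\}$ and $\{x_2=0\}$. One must then recover the coefficients of $p_i$ from the positions of the infinitely near centres, and prove that these positions are isomorphism invariants. Your unspecified ``cross-ratio data'' does not yet accomplish that last step.
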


In \cite{PR2021}, the authors computed explicitely the fundamental group at infinity of $S$ as follows.

\begin{prop}[{cf. \cite[Proposition~4.5]{PR2021}}]
\label{Prop: Fundamental group at infinity of surfaces in S_0 by P-R}
The fundamental group at infinity of $S$ admits the presentation
\begin{equation}\label{eq:pi1infinity}
\pi_1^\infty(S)
=
\left\langle
\delta_1,\delta_2,\lambda
\;\middle|\;
\delta_1=[\gamma_2,\lambda^{-1}],
\;
\delta_2=[\gamma_1,\lambda],
\;
[\gamma_1,\gamma_2]=1
\right\rangle,
\end{equation}
where $\gamma_j=\delta_j^{\,d_j}$.

Moreover, for $j\in\{1,2\}$, the loop $\gamma_j$ is the attaching circle of the $(-d_j)$-framed $2$-handle in \cite[Figure~3]{PR2021}, and $\lambda$ is the belt sphere of the $0$-framed $2$-handle.
\end{prop}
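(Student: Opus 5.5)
The plan is to compute $\pi_1^\infty(S)$ from an explicit log-smooth completion of $S=S_{p_1,p_2}$ (Theorem~\ref{Thm: PR classification S_0}), using Wagreich's presentation (Proposition~\ref{prop:Wagreich-cycle-presentation}) in the same way as in Proposition~\ref{Prop: Fundamental group at infinity of 2-torus}. As a cross-check I would also read the answer off the Kirby diagram of \cite[Figure~3]{PR2021}, since $M_S$ is the boundary of the plumbed $4$-manifold.

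First I would construct the completion. The description of $S_{p_1,p_2}$ as an affine modification of $\C^2=\Spec\C[x_1,x_2]$ along the two coordinate axes suggests starting from a boundary of $\PP^2$ (or $\PP^1\times\PP^1$). Blowing up over the points at infinity of the axes, as dictated by $p_1$ and $p_2$, and then removing the proper transforms of the axes, should give an SNC boundary $D$. I expect $D$ to consist of a unique cycle of rational curves containing a $0$-curve, with two twigs $T_1,T_2$ attached. The twig $T_j$ should be a chain whose discriminant is $d_j$, where $d_j$ is determined by $\deg p_j$; this is the $(-d_j)$-framed handle in the surgery picture. Since $\bar\kappa(S)=0$ and $S$ is affine with trivial units and UFD coordinate ring, the shape of $D$ (a cycle plus twigs, no centres) is consistent with Fujita's classification. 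That classification is the input I would invoke to make sure Proposition~\ref{prop:Wagreich-cycle-presentation} applies.

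Next I would apply Wagreich's presentation, choosing $\varepsilon_0$ on the cycle so that the loop generator $u$ is the meridian of the $0$-curve, i.e.\ $u=\lambda$. Along each twig $T_j$ the relations $B_i$ form a Mumford-type chain recursion (Proposition~\ref{prop: Mumford's presentation for tree}). Solving this recursion from the tip inward, all twig generators become powers of the tip meridian $\delta_j$, and the generator at the attaching vertex equals $\gamma_j=\delta_j^{d_j}$. The commutation relations $A_{i,j}$ make these powers commute with the neighbouring cycle generators. On the cycle, the relations $B_0$ and $B_r$ involve the conjugates $u^{\mp1}e_r u^{\pm1}$ and $ue_0u^{-1}$. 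After eliminating the remaining cycle generators, as was done for $\C^*\times\C^*$, these should become $\delta_1=[\gamma_2,\lambda^{-1}]$ and $\delta_2=[\gamma_1,\lambda]$. The relation $A_{0,r}$, or the commutation across the edge joining the two twig attachment points, should reduce to $[\gamma_1,\gamma_2]=1$. Finally I would verify, as in the torus case, that every other relation follows from these three.

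The main obstacle is the bookkeeping of conjugation by $u$ and of the signs and orientations of the meridians. A wrong convention changes $[\gamma_2,\lambda^{-1}]$ into $[\gamma_2,\lambda]$ or swaps the roles of $\delta_1$ and $\delta_2$. To fix conventions I would first run the computation with $d_1=d_2=0$, i.e.\ no twigs. In that case the result must collapse to $\Z^3$ as in Proposition~\ref{Prop: Fundamental group at infinity of 2-torus}. Having fixed conventions this way, I would then check the identification of $\gamma_j$ and $\lambda$ with the attaching circle and belt sphere in \cite[Figure~3]{PR2021} directly against the surgery presentation.
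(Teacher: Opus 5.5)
You are on a route the paper does not take, since the paper gives no proof here: it imports the presentation from \cite[Proposition~4.5]{PR2021}, where it is read off a handle decomposition of $S$. A Wagreich computation can be made to give exactly this presentation. As written, though, your plan has a gap at the only place with real content: you never determine the weighted dual graph of $D$. Saying that ``a cycle plus twigs'' is consistent with Fujita's classification fixes neither the weights nor where the twigs sit, and all three relations depend on exactly that.

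What you need to establish is the following. $S$ is $\C^*\times\C^*$ with two disjoint affine lines added; there are exactly two because $\C[S]$ is factorial with trivial units. Each line comes from a half-point attachment. Take $\PP^1\times\PP^1$, whose boundary is a square of $0$-curves $C_1,C_2,C_3,C_4$ in cyclic order. Then $\bar L_j$ is the last of $d_j$ successive blow-ups over a general point of $C_j$. Factoriality and trivial units force the two sides to form a $\Z$-basis of cocharacters, so they can be taken adjacent. They also force multiplicity one, so every centre is a general point of the newest exceptional curve. Hence $D$ is the $4$-cycle with weights $(-1,-1,0,0)$, with $C_1\cap C_2\neq\varnothing$, plus a $(-2)$-chain of length $d_j-1$ at $C_j$.

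Number the vertices $\varepsilon_0=C_3$, $\varepsilon_1=C_4$, $\varepsilon_2=C_1$, then the twig vertices, and $\varepsilon_r=C_2$. Wagreich's relations then give:
\begin{itemize}
\item $e_{C_j}=\delta_j^{d_j}=\gamma_j$, and $e_{C_j}^{-1}$ times the adjacent twig meridian equals $\delta_j^{-1}$; this is where the weight $-1$ is used;
\item $e_{C_3}=\gamma_1^{-1}$ and $e_{C_4}=u^{-1}\gamma_2^{-1}u$;
\item at the $(-1)$-vertices, $\delta_1=[\gamma_2,u^{-1}]$ and $\delta_2=u\gamma_1^{-1}u^{-1}\gamma_1$;
\item from the edge $C_1C_2$, $[\gamma_1,\gamma_2]=1$.
\end{itemize}
All other relations are consequences; here $[x,y]=xyx^{-1}y^{-1}$. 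Set $\lambda=u$ and replace $\delta_2$ by $\gamma_1\delta_2\gamma_1^{-1}=[\gamma_1,\lambda]$. This does not change $\gamma_2$, because $[\gamma_1,\gamma_2]=1$, and you get exactly the stated presentation.

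Two steps of your plan are also wrong as stated.

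First, no choice of $\varepsilon_0$ makes $u$ ``the meridian of the $0$-curve.'' In Wagreich's presentation $u$ is the extra generator coming from the cycle of $\Gamma$, a loop crossing the cut edge $\varepsilon_0\varepsilon_r$. The meridians of the $0$-curves are instead eliminated, as above.

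Second, the calibration would fail, for three reasons:
\begin{itemize}
\item an empty twig has discriminant $1$, not $0$;
\item setting $d_1=d_2=0$ in the target gives $\gamma_j=1$, hence $\delta_1=\delta_2=1$ and the group $\Z$, not $\Z^3$;
\item even $d_1=d_2=1$ is not $\C^*\times\C^*$, since the lines are still attached and $C_1,C_2$ are $(-1)$-curves.
\end{itemize}
The torus is not a member of the family, so its computation cannot fix your signs. The ordering built into Wagreich's presentation fixes them, and any residual discrepancy is a Tietze move like the one above.

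Finally, ``check against the surgery presentation'' does not prove the clause about \cite[Figure~3]{PR2021}. In your picture $\gamma_j$ is the meridian of $C_j$ and $\lambda$ is the loop once around the cycle of $D$. Identifying these with the attaching circles and the belt sphere of the handle decomposition of $S$ is a separate step you would still have to supply or cite.
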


\section{\bf Proof of the Main Theorem}\label{se3}

Let $Y$ be a smooth affine surface properly dominated by the complex algebraic $2$-torus
$X=\C^*\times\C^*$.
Our goal is to determine all possible isomorphism classes of $Y$.

A recurring feature of our arguments is that many geometric and topological
properties are preserved under finite \'etale coverings. Consequently, it is
convenient to first replace $Y$, whenever necessary, by a suitable finite
\'etale cover so that the induced homomorphism on fundamental groups becomes
surjective. This reduction will simplify the subsequent arguments and will be
invoked repeatedly throughout the proofs.

We begin by recording this elementary but useful observation.

\begin{rem}[Reduction to the surjective fundamental group case]
\label{Rem: pi_1 level map is surjective}

Let $f\colon X\to Y$ be a finite surjective morphism between smooth affine
varieties. We first reduce to the case where the induced homomorphism
$f_\ast\colon \pi_1(X)\to\pi_1(Y)$ is surjective.

Suppose that $f_\ast$ is not surjective. Since the image
$f_\ast(\pi_1(X))$ has finite index in $\pi_1(Y)$ by a theorem of Serre
(cf.~\cite{Ser1959}), covering space theory yields a connected finite
\'etale covering $g\colon Z\to Y$ corresponding to the subgroup
$f_\ast(\pi_1(X))\subseteq\pi_1(Y)$. Consequently, $f$ admits the
following factorization:
\[
\begin{tikzcd}[column sep=3em,row sep=2em]
&
Z \arrow[dr,"g"] &
\\
X \arrow[ur,"h"] \arrow[rr,"f"'] &&
Y
\\[-5em]
&
{\scalebox{2}{$\circlearrowright$}}
\end{tikzcd}
\]
where $h\colon X\to Z$ is again a finite surjective morphism satisfying
$f=g\circ h$. Moreover,
$g_\ast\bigl(\pi_1(Z)\bigr)=f_\ast\bigl(\pi_1(X)\bigr)$, and hence
$h_\ast\colon\pi_1(X)\to\pi_1(Z)$ is surjective. Furthermore,
$\deg(f)=\deg(g)\deg(h)$.

Therefore, after replacing $Y$ by the finite \'etale covering $Z$, we may
always assume that the induced homomorphism on fundamental groups is
surjective. This reduction will be invoked repeatedly throughout the
proofs in this section.
\end{rem}

The above reduction enables us to replace the target surface by a suitable
finite \'etale covering whenever necessary, without changing its smoothness
or logarithmic Kodaira dimension. The following result describes the
connected finite \'etale coverings of the complement of a finite subset of
$\G_m^2$. It will play a fundamental role throughout the remainder of this
section, since it guarantees that every connected finite \'etale cover of
$\G_m^2$ is again isomorphic to $\G_m^2$.

\begin{prop}\label{Prop: Finite covering of complement of a finite subset from 2-torus}
Let $S=\{p_1,\ldots,p_k\}\subseteq\G_m^2$ be a finite subset, where $k\ge0$, and
let $S=\varnothing$ when $k=0$. Suppose that
\[
\varphi\colon V\to\G_m^2\setminus S
\]
is a connected finite \'etale morphism. Then there exists a finite \'etale
endomorphism
\[
\Phi\colon\G_m^2\to\G_m^2
\]
such that $V\cong\G_m^2\setminus\Phi^{-1}(S)$. In particular, when
$S=\varnothing$, we have $V\cong\G_m^2$.
\end{prop}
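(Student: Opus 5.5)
The plan is to extend the cover across the removed points using purity of the branch locus together with the Riemann existence theorem, and then to identify the resulting finite étale cover of $\G_m^2$ by topology.

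\emph{Step 1: removing $S$ does not change $\pi_1$.} Since $S$ is a finite set of points in a smooth complex surface, i.e.\ it has real codimension $4$, removing it does not change the fundamental group. Concretely, general position shows that the inclusion $\G_m^2\setminus S\hookrightarrow\G_m^2$ induces an isomorphism
\[
\pi_1(\G_m^2\setminus S)\cong\pi_1(\G_m^2)\cong\Z^2 .
\]
So the connected finite étale cover $\varphi$ corresponds to a subgroup $H\subseteq\Z^2$ of finite index $d=\deg\varphi$.

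\emph{Step 2: the corresponding cover of the full torus.} Let $H\subseteq\Z^2=\pi_1(\G_m^2)$ be the same subgroup. Any finite-index subgroup of $\Z^2$ is the image of an injective endomorphism given by an integer matrix $M\in M_2(\Z)$ with $|\det M|=d$. Such a matrix $M=(m_{ij})$ defines the monomial map
\[
\Phi(t,u)=(t^{m_{11}}u^{m_{21}},\,t^{m_{12}}u^{m_{22}}).
\]
This map is a homomorphism of algebraic groups with finite kernel of order $|\det M|$, so it is a finite étale endomorphism of $\G_m^2$ of degree $d$, with $\Phi_\ast(\pi_1(\G_m^2))=H$.

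\emph{Step 3: identifying $V$.} The restriction
\[
\Phi|\colon \G_m^2\setminus\Phi^{-1}(S)\to\G_m^2\setminus S
\]
is a connected finite étale cover. By Step 1 applied to the finite set $\Phi^{-1}(S)$, its image on $\pi_1$ is again $H$. Covering space theory then gives an analytic isomorphism over the base between $V^{\rm an}$ and $(\G_m^2\setminus\Phi^{-1}(S))^{\rm an}$. The Riemann existence theorem (Grauert--Remmert/SGA1) upgrades this to an algebraic isomorphism, since finite étale algebraic covers are equivalent to finite topological covers. Connectedness of $V$ guarantees that $V$ corresponds to a subgroup rather than to a disconnected cover.

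\emph{Main obstacle.} The delicate point is the identification in Step 3 carried out in the algebraic category. There are two concerns:
\begin{itemize}
\item[(a)] justifying rigorously the codimension argument for $\pi_1$, by a transversality argument or by citing the standard result for complements of codimension-$\ge2$ analytic subsets of complex manifolds;
\item[(b)] invoking the Riemann existence theorem correctly.
\end{itemize}
As an alternative for (b), I could argue directly by Zariski--Nagata purity. Take the normalization $\bar V$ of $\G_m^2$ in the function field of $V$. It is finite over $\G_m^2$, and étale outside the finite set $S$, hence étale everywhere by purity of the branch locus, since the base is smooth. Then $V=\bar V\setminus(\text{preimage of }S)$. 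This reduces the statement to the case $S=\varnothing$, which is exactly Step 2 together with the classification of subgroups of $\Z^2$.
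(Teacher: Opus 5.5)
Your argument is correct, but your main route differs from the paper's. The paper first extends the cover across $S$ algebraically. It takes the normalization $W$ of $\G_m^2$ in $\C(V)$ and uses Zariski--Nagata purity, together with miracle flatness, to see that $\bar\varphi\colon W\to\G_m^2$ is finite \'etale. It then identifies $W$ with $\G_m^2$ through a monomial map $\Phi$, using $\pi_1(\C^*\times\C^*)\cong\Z^2$ and the Riemann existence theorem over the full torus, and finally recovers $V=W\setminus\bar\varphi^{-1}(S)$. This is exactly the alternative you sketch at the end of (b).

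Your primary route stays over the punctured base instead. The real-codimension-$4$ fact $\pi_1(\G_m^2\setminus S)\cong\pi_1(\G_m^2)$ follows from a one-line van Kampen argument with small $4$-balls around the points, which settles your concern (a). It lets you compare $\varphi$ directly with the restriction of $\Phi$ to $\G_m^2\setminus\Phi^{-1}(S)$. The Riemann existence theorem for the punctured torus then makes the comparison algebraic; the SGA1 version holds for any scheme locally of finite type over $\C$, so (b) is not a problem either.

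What you gain is that no normalization, algebraic purity theorem or flatness criterion is needed: the topological codimension count does the work of purity. What the paper gains is an explicit smooth algebraic extension $W\supseteq V$, which reduces everything at once to the case $S=\varnothing$. It also needs the Riemann existence theorem only over $\G_m^2$ itself.

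One small slip in Step 2. With $\Phi(t,u)=(t^{m_{11}}u^{m_{21}},t^{m_{12}}u^{m_{22}})$, the loops in $t$ and $u$ map to $(m_{11},m_{12})$ and $(m_{21},m_{22})$. So $\Phi_\ast(\Z^2)$ is the row lattice of $M$, not its column lattice. You should take the rows of $M$ as a basis of $H$, or transpose the exponents as the paper does.
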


\begin{proof}
Let $W$ be the normalization of $\G_m^2$ in the function field $\C(V)$ of
$V$. Then $W$ is a normal affine surface containing $V$ as a Zariski open
subset, and the finite morphism $\varphi$ extends uniquely to a finite
surjective morphism
\[
\overline{\varphi}\colon W\to\G_m^2.
\]

Since $\varphi$ is \'etale, the morphism $\overline{\varphi}$ is unramified
over $\G_m^2\setminus S$. Hence every irreducible component of the branch
locus of $\overline{\varphi}$ is contained in $S$. As $S$ has codimension $2$
in $\G_m^2$, the Zariski--Nagata purity theorem implies that the branch locus of $\overline{\varphi}$ is empty. Therefore $\overline{\varphi}$ is unramified. Since $\G_m^2$ is
smooth, $W$ is a normal algebraic surface (hence Cohen--Macaulay), and $\overline{\varphi}$ is finite (hence
equidimensional), Serre's miracle flatness implies that
$\overline{\varphi}$ is flat. As it is already unramified, it follows that
$\overline{\varphi}$ is \'etale. Since every \'etale morphism is smooth, it
follows that $W$ is smooth.

Passing to the associated complex analytic spaces, we obtain a connected
finite-sheeted analytic covering
\[
\overline{\varphi}^{\rm an}\colon
W^{\rm an}\to(\G_m^2)^{\textrm{an}}=\C^*\times \C^*.
\]
Since $\pi_1(\C^*\times \C^*)\cong\Z^2$, the covering
$\overline{\varphi}^{\rm an}$ corresponds to the finite-index subgroup
\[
H=\overline{\varphi}^{\rm an}_*\bigl(\pi_1(W^{\rm an})\bigr)\subseteq\Z^2.
\]

As $H$ is a free abelian group of rank $2$, choose a $\Z$-basis of $H$. If
the corresponding basis vectors are the columns of the integer matrix
\[
T=
\begin{pmatrix}
a&b\\
c&d
\end{pmatrix},
\qquad
\det(T)\neq0,
\]
then $H=T(\Z^2)$. Consider the associated monomial endomorphism
\[
\Phi\colon\G_m^2\to\G_m^2,\qquad
(z,w)\longmapsto(z^aw^b,z^cw^d).
\]
It is well known that $\Phi$ is a connected finite \'etale endomorphism
satisfying
\[
\Phi^{\rm an}_*\bigl(\pi_1(\C^*\times \C^*)\bigr)=H.
\]

Hence, by the classification of connected covering spaces,
$\overline{\varphi}^{\rm an}$ and $\Phi^{\rm an}$ are isomorphic as analytic
coverings of $\C^*\times \C^*$. The Generalized Riemann Existence Theorem
then implies that $\overline{\varphi}$ and $\Phi$ are isomorphic as finite
\'etale covers of $\G_m^2$. Thus, after identifying $W$ with $\G_m^2$, we may
assume that $\overline{\varphi}=\Phi$.

Finally, since $\varphi$ is the restriction of $\overline{\varphi}$ to
$\G_m^2\setminus S$, we obtain
\[
V\cong
W\setminus\overline{\varphi}^{-1}(S)
\cong
\G_m^2\setminus\Phi^{-1}(S).
\]
Since $\Phi$ is finite, the set $\Phi^{-1}(S)$ is finite, completing the
proof.
\end{proof}

We now proceed to classify smooth affine surfaces properly dominated by
$\C^*\times\C^*$. Since
$\bar\kappa(\C^*\times\C^*)=0$ and logarithmic Kodaira dimension does not
increase under finite surjective morphisms by
Lemma~\ref{Lem: Finite morphism basic properties}(1), we necessarily have
$\bar\kappa(Y)\le0$. Thus it suffices to consider the two cases
$\bar\kappa(Y)=-\infty$ and $\bar\kappa(Y)=0$, which we treat separately.

\subsection{The case $\bar\kappa(Y)=-\infty$}

\begin{thm}\label{Thm: Affine surfaces with kappa negative properly dominated by 2-torus}
Let $Y$ be a smooth affine surface with $\bar\kappa(Y)=-\infty$. If $Y$ is properly
dominated by $\C^*\times\C^*$, then $Y$ is isomorphic to either $\A^2$ or
$\A^1\times\C^*$.
\end{thm}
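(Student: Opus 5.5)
Since $\bar\kappa(Y)=-\infty$ and $Y$ is a smooth affine surface, the structure theorem of Miyanishi--Sugie and Fujita gives an $\A^1$-fibration $f\colon Y\to B$ onto a smooth algebraic curve $B$. The plan is to use the topological and Picard-theoretic constraints coming from the finite morphism $\varphi\colon X=\C^*\times\C^*\to Y$ to force $f$ to be an $\A^1$-bundle over $\A^1$ or $\C^*$. Then Lemma~\ref{Lem: A^1-bundle over affine rational curve is trivial} gives $Y\cong\A^1\times B$.

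First I collect the invariants. Since $\Pic(X)=0$, Lemma~\ref{Lem: Finite morphism basic properties}(3) gives $\rho(Y)=0$. Lemma~\ref{Lem: Finite morphism basic properties}(2) gives $b_1(Y)\le 2$, $b_2(Y)\le 1$, $b_3(Y)=b_4(Y)=0$. Hence $e(Y)=1-b_1+b_2\le 2$.

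Because $\rho(Y)=0$, Lemma~\ref{Lem: Q-factorial fibration} shows that $B$ is a smooth affine rational curve, so $B=\PP^1\setminus\{q_1,\dots,q_m\}$ with $m\ge1$, and that every fiber of $f$ is irreducible. Each fiber is then of the form $\mu_i\A^1$. Since $f$ admits multiple fibers at worst, the theorem of Suzuki--Zaidenberg (Theorem~\ref{Suzuki's formula}) gives $e(Y)=e(B)=2-m$. All singular fibers contribute $0$, since $e(\A^1)=1$ and reduced fibers are $\A^1$.

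The main step is to rule out multiple fibers and to bound $m$. I would use the finite morphism in the following way. Pull back $f$ to $X$ and take the Stein factorization of $f\circ\varphi\colon X\to B$. This gives $X\to\tilde B\to B$, where $\tilde B$ is a smooth affine curve and $\tilde B\to B$ is finite. Its general fiber is a finite cover of $\A^1$ inside $\C^*\times\C^*$, so it is a curve $C$ with $\bar\kappa(C)\le 0$ dominating $\A^1$. Such a curve is $\A^1$ or $\C^*$.

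Now $\C^*\times\C^*$ contains no $\A^1$, because every morphism $\A^1\to\C^*$ is constant. So the fibers of $X\to\tilde B$ are $\C^*$'s, and $\tilde B$ is dominated by $\C^*\times\C^*$, so $\tilde B$ is $\C^*$ or $\A^1$. A multiple fiber $\mu\A^1$ of $f$ would make $\tilde B\to B$ ramified over that point with every index divisible by $\mu$, in the orbifold sense, and $m\ge 1$.

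I would then compare orbifold Euler characteristics. Let $\chi^{orb}(B)=2-m-\sum(1-1/\mu_i)$. Since $\tilde B\to B$ factors through the orbifold cover, $\chi(\tilde B)=\deg\cdot\chi^{orb}(B)$ when the ramification is exactly $\mu_i$, and $\le$ in general. Together with $\chi(\tilde B)\ge0$, this forces $m\le2$. If $m=2$, it also forces no multiple fibers. If $m=1$, at most two multiple fibers can occur, and their multiplicities must be coprime or small.

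The case I expect to be the main obstacle is $B=\A^1$ with one or two multiple fibers. Such surfaces can occur abstractly, as quotients of $\A^1\times\C^*$ by cyclic groups. There I would use $\pi_1$. We have $\pi_1(Y)=\langle a,b\mid a^{\mu_1}=b^{\mu_2}=1\rangle$ or a cyclic group, and by Remark~\ref{Rem: pi_1 level map is surjective} we may assume it is a quotient of $\Z^2$, hence abelian. Passing to the universal (finite abelian) étale cover $Z\to Y$, we get an $\A^1$-fibration on $Z$ with $\rho(Z)=0$, again finitely dominated by $X$. I expect this to reduce to the simply connected case. There, by Gurjar--Miyanishi, the $\A^1$-fibred surface with $\rho=0$, $e=1$ and $\pi_1=1$ is $\A^2$. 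Then $Y$ is a quotient of $\A^2$ by a finite group acting freely, which is impossible unless the group is trivial, since any finite group acting on $\A^2$ has a fixed point. So $Y\cong\A^2$.

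In the case $m=2$ with no multiple fibers, $f$ is an $\A^1$-bundle over $\C^*$, and Lemma~\ref{Lem: A^1-bundle over affine rational curve is trivial} yields $Y\cong\A^1\times\C^*$. In the case $m=1$ with no multiple fibers, the same lemma gives $Y\cong\A^2$.
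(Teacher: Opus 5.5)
There is a genuine gap in your main step. The claim that the general fiber $C$ of $X\to\tilde B$ satisfies $\bar\kappa(C)\le 0$ is false, because being a finite cover of $\A^1$ places no restriction on $C$. For example, $\varphi(x,y)=(x+x^{-1},\,y+y^{-1})$ is a finite surjective morphism $\C^*\times\C^*\to\A^2$. For the $\A^1$-fibration $f(u,v)=u+v$, the preimage of a general fiber is $\{x+x^{-1}+y+y^{-1}=c\}$. This is an irreducible double cover of $\C^*_x$ branched at four points, i.e.\ an elliptic curve minus four points. So the fibers of $\psi\colon X\to\tilde B$ need not be $\C^*$. When $\tilde B\cong\A^1$, $\psi$ may itself have multiple fibers, so the multiplicity $\mu$ of a fiber $\mu\A^1$ of $f$ can be absorbed by $\psi$ rather than by ramification of $\tilde B\to B$. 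Your orbifold inequality is therefore unjustified. Two pieces can be salvaged by other means. The bound $m\le2$ only needs $e(\tilde B)\ge 0$ and Riemann--Hurwitz. For $B\cong\C^*$, Riemann--Hurwitz gives $e(\tilde B)\le 0$, so $\tilde B\to B$ is \'etale. Then $\psi$ is a primitive monomial with reduced fibers, so $f\circ\varphi$ has reduced fibers and $f$ has no multiple fiber. That is a different argument from the one you gave.

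Your fallback for $B\cong\A^1$ does not close the argument either. Remark~\ref{Rem: pi_1 level map is surjective} does not make $\pi_1(Y)$ abelian. It replaces $Y$ by a finite \'etale cover $Z$, whose $\A^1$-fibration need not lie over $\A^1$, and you would then have to descend from $Z$ back to $Y$. Concretely, take two multiple fibers of multiplicity $2$, which is exactly the boundary case $\chi^{orb}(B)=0$ your inequality allows. Then $\pi_1(Y)\cong\Z/2*\Z/2$ is infinite and virtually $\Z$, so there is no finite universal cover and no reduction to a simply connected surface. This case can be killed separately. The double cover $Y'$ with $\pi_1(Y')\cong\Z$ has $e(Y')=b_2(Y')=2$. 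But $Y'$ is properly dominated by a component of $X\times_Y Y'$, which is $\C^*\times\C^*$ by Proposition~\ref{Prop: Finite covering of complement of a finite subset from 2-torus}, so $b_2(Y')\le 1$.

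The idea you are missing is the one the paper uses to exclude all multiple fibers at once. First pass to $Z$ with $\pi_1(Z)$ abelian, so the base is $\A^1$ or $\C^*$. The ramified covering trick then turns a fiber $\mu\A^1$ into $\mu$ disjoint copies of $\A^1$ on a finite \'etale cover $Z'$. Since $Z'$ is properly dominated by a finite \'etale cover of $\C^*\times\C^*$, which is again $\C^*\times\C^*$, one gets $\rho(Z')=0$. This contradicts Lemma~\ref{Lem: Q-factorial fibration}(2). The paper then descends from $Z\cong\A^2$ or $\A^1\times\C^*$ to $Y$ using the $K(\pi,1)$ property.
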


\begin{proof}
Let $X:=\C^*\times\C^*$ and let $f\colon X\to Y$ be a finite surjective
morphism. If $\deg(f)=1$, then $f$ is a finite birational morphism and hence an
isomorphism by Zariski's Main Theorem, which cannot occur since $\bar\kappa(X)=0$ whereas $\bar\kappa(Y)=-\infty$. Thus $\deg(f)>1$.

By Remark~\ref{Rem: pi_1 level map is surjective}, we may factorize $f$ as $f=g\circ h$, where
$g\colon Z\to Y$ is a finite \'etale morphism and
$h\colon X\to Z$ is a finite surjective morphism inducing a surjection
$h_\ast\colon\pi_1(X)\to\pi_1(Z)$.
Since logarithmic Kodaira dimension is preserved under finite \'etale
coverings by Lemma \ref{Lem: Finite morphism basic properties}(1), we have $\bar\kappa(Z)=\bar\kappa(Y)=-\infty$.

By the results of Fujita--Miyanishi--Sugie
\cite{Fuj1979,MS1980,Sug1980},
the surface $Z$ admits an $\A^1$-fibration
$\varphi\colon Z\to B$
over a smooth algebraic curve $B$.
Furthermore, $\rho(Z)=0$ by
Lemma~\ref{Lem: Finite morphism basic properties}(3), since $X$ is
factorial. Hence
Lemma~\ref{Lem: Q-factorial fibration}
implies that $B$ is affine and that
$\varphi$ has no reducible fibers.

Since $\pi_1(X)\cong\Z^2$ and
$h_\ast\colon\pi_1(X)\to\pi_1(Z)$ is surjective,
the group $\pi_1(Z)$ is abelian.
Moreover, the induced homomorphism
$\varphi_\ast\colon\pi_1(Z)\to\pi_1(B)$ is surjective, and therefore
$\pi_1(B)$ is also abelian. Hence $B\cong\A^1$ or $\C^*$.

It remains to show that $\varphi$ has no multiple fibers.
Suppose, to the contrary, that $\varphi$ has a multiple fiber.
By the ramified covering trick (cf. \cite[Lemma 1.1.9]{GMM2021}), there exists a finite morphism
$B'\to B$ inducing a finite \'etale covering
$p\colon Z'\to Z$, where
$Z':=\overline{Z\times_BB'}$ is the normalization of the fiber product,
such that the induced $\A^1$-fibration
$\varphi'\colon Z'\to B'$
has no multiple fibers.
In the process of eliminating multiple fibers, the fibration $\varphi'$ necessarily acquires a reducible fiber.
Hence
$\rho(Z')>0$
by Lemma~\ref{Lem: Q-factorial fibration}.

Now let
$q\colon V\to X$
be the pullback of $p$ via $h$.
Since $h_\ast$ is surjective, $V$ is connected, and the induced morphism
$h'\colon V\to Z'$ is finite surjective. Moreover, we have the commutative
diagram
\[
\begin{tikzcd}[column sep=5em,row sep=2em]
V \arrow[r,"h'"] \arrow[d,"q"'] &
Z' \arrow[d,"p"]\\
X \arrow[r,"h"'] &
Z
\arrow[phantom,from=1-1,to=2-2,
"{\raisebox{-0.15em}{\scalebox{2.2}{$\circlearrowleft$}}}" description]
\end{tikzcd}
\]

Since $q$ is a finite covering,
Proposition~\ref{Prop: Finite covering of complement of a finite subset from 2-torus}
shows that $V\cong\C^*\times\C^*$.
Hence $Z'$ is again properly dominated by
$\C^*\times\C^*$.
Repeating the above argument with $Z'$ in place of $Z$, it turns out that $\rho(Z')=0$, contradicting the existence of a reducible fiber of $\varphi'$.
Therefore $\varphi$ has no multiple fibers.
Since $\varphi$ also has no reducible fibers,
it follows that $\varphi$ is an $\A^1$-bundle.
As $B$ is an affine rational curve,
Lemma~\ref{Lem: A^1-bundle over affine rational curve is trivial}
implies that $\varphi$ is trivial.

We now distinguish two cases.

\smallskip
\noindent\textbf{Case 1.} \emph{$B\cong\A^1$.}

Then $Z\cong\A^2$.
Since $g$ is a finite topological covering and
$\pi_1(Z)=1$, the group $\pi_1(Y)$ is finite.
On the other hand,
$\A^2$ is contractible, so $Y$ is a $K(\pi,1)$-space. Since $Y$ is an affine surface, it is a finite-dimensional CW complex; therefore $\pi_1(Y)$ is torsion-free using \cite[Proposition 2.45]{Hat2002}, forcing $\pi_1(Y)$ to be trivial.
Consequently, $g$ is an isomorphism, and hence
$Y\cong Z\cong\A^2$.\footnote{\textsc{Aliter.}
Since $Z\cong\A^2$ and $Y$ is the image of $Z$ under a finite morphism of
smooth affine surfaces, it follows from
\cite{Miy1980b,GS1984}
that $Y\cong\A^2$.}

\smallskip
\noindent\textbf{Case 2.} \emph{$B\cong\C^*$.}

Then $Z\cong\A^1\times\C^*$,
which is an Eilenberg--MacLane $K(\pi,1)$-space.
Since $g\colon Z\to Y$ is a finite topological covering,
$Y$ is also an Eilenberg--MacLane $K(\pi,1)$-space.
By \cite[Theorem~5.3]{GGH2023},
$Y$ is an $\A^1$-bundle over a smooth algebraic curve $C$. Again, $\rho(Y)=0$ by
Lemma~\ref{Lem: Finite morphism basic properties}(3),
since $X$ is factorial.
Hence
Lemma~\ref{Lem: Q-factorial fibration}
implies that $C$ is affine.
Moreover,
$e(Z)=e(\A^1\times\C^*)=0$, so
$e(Y)=0$, and therefore
$e(C)=0$.
Thus $C\cong\C^*$, and
Lemma~\ref{Lem: A^1-bundle over affine rational curve is trivial}
shows that
$Y\cong\A^1\times\C^*$.\footnote{\textsc{Aliter.}
Since $Z\cong\A^1\times\C^*$,
one may directly apply
\cite{Fur1989}
to conclude that
$Y\cong\A^1\times\C^*$,
as $Y$ is non-contractible because
$g\colon Z\to Y$
is a finite covering.}
\end{proof}

\subsection{The case $\bar\kappa(Y)=0$}

Having completed the classification in the case
$\bar\kappa(Y)=-\infty$, we now turn to the remaining case
$\bar\kappa(Y)=0$. The argument is considerably more subtle. Rather than
proceeding directly to the classification, we first establish a sequence of
topological and geometric obstructions that progressively eliminate possible
fundamental groups and geometric configurations of the target surface $Y$. These
auxiliary results ultimately lead to the classification theorem for smooth
affine surfaces of logarithmic Kodaira dimension zero. The argument proceeds
in several steps:

\begin{enumerate}[\indent\rm(1)]
\item We first prove that the fundamental group of $Y$ cannot be finite. A
key ingredient is to show that no smooth affine surface in the class
$\mathscr{S}_0$ can be properly dominated by $\C^*\times\C^*$.

\item Next, we exclude smooth affine surfaces with an abelian fundamental
group of rank $1$ and positive Euler characteristic.

\item Finally, combining these obstructions with Kojima's classification of
strongly minimal affine surfaces of logarithmic Kodaira dimension zero, we
obtain the desired classification.
\end{enumerate}

\begin{prop}\label{Prop: Affine surface properly dominated by 2-torus and with finite pi_1}
Let $Y$ be a smooth affine surface with $\bar\kappa(Y)=0$. If $Y$ is properly
dominated by $\C^*\times\C^*$, then $\pi_1(Y)$ cannot be finite.
\end{prop}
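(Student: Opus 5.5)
We argue by contradiction: assume $\pi_1(Y)$ is finite, and let $f\colon X=\C^*\times\C^*\to Y$ be finite and surjective. The plan has three steps: pass to a simply connected cover of $Y$, show that this cover lies in $\mathscr{S}_0$, and then rule out $\mathscr{S}_0$ using the fundamental group at infinity.

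\textbf{Step 1: reduce to $\pi_1(Y)=1$.} Let $\tilde Y\to Y$ be the universal covering. Since $\pi_1(Y)$ is finite, this is a finite \'etale cover. By the Generalized Riemann Existence Theorem, $\tilde Y$ is a smooth affine surface. By Lemma~\ref{Lem: Finite morphism basic properties}(1), $\bar\kappa(\tilde Y)=0$.

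Form the fibre product $X\times_Y\tilde Y$ and take a connected component $V$. The projection $V\to X$ is connected finite \'etale, so $V\cong\C^*\times\C^*$ by Proposition~\ref{Prop: Finite covering of complement of a finite subset from 2-torus}. The projection $V\to\tilde Y$ is finite. It is surjective because its image is closed of dimension $2$. Hence $\tilde Y$ is again properly dominated by $\C^*\times\C^*$, and we may assume $\pi_1(Y)=1$.

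\textbf{Step 2: show $Y\in\mathscr{S}_0$.}
\begin{itemize}
\item[(a)] \emph{Picard group.} On the affine surface $Y$ we have $H^i(Y,\mathcal O_Y)=0$ for $i>0$. The exponential sequence then gives $\Pic(Y)\cong H^2(Y;\Z)$. The torsion of $H^2(Y;\Z)$ is the torsion of $H_1(Y;\Z)=0$. Also $\rho(Y)\le\rho(X)=0$ by Lemma~\ref{Lem: Finite morphism basic properties}(3). Hence $\Pic(Y)=0$, and $\C[Y]$ is a UFD.
\item[(b)] \emph{Units.} A non-constant unit $u\in\C[Y]^*$ defines a dominant morphism $u\colon Y\to\C^*$. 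It induces a nonzero class $u^*[d\theta]\in H^1(Y;\Z)$, which contradicts $b_1(Y)=0$. Hence $\C[Y]^*=\C^*$.
\item[(c)] Together with $\bar\kappa(Y)=0$, (a) and (b) give $Y\in\mathscr{S}_0$. By Theorem~\ref{Thm: PR classification S_0}, $Y\cong S_{p_1,p_2}$.
\end{itemize}

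\textbf{Step 3: $S_{p_1,p_2}$ is not dominated by the torus.} This is the main obstacle. I would use fundamental groups at infinity.
\begin{itemize}
\item[(a)] \emph{Finite-index image.} Since $f$ is proper, it maps ends to ends. Taking compact exhaustions $K_i$ of $Y$ and $f^{-1}(K_i)$ of $X$, $f$ restricts to a proper map $M_X\to M_Y$ of $3$-manifolds at infinity, up to homotopy. This map has degree $\deg f\neq0$. A map of nonzero degree between closed orientable $3$-manifolds has image of finite index in $\pi_1$: it lifts to the covering corresponding to the image, and a nonzero-degree map must factor through a finite cover. By Proposition~\ref{Prop: Fundamental group at infinity of 2-torus}, $\pi_1^\infty(X)\cong\Z^3$. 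Hence $\pi_1^\infty(Y)$ is virtually abelian.
\item[(b)] \emph{The presentation is not virtually abelian.} It remains to show that the group in Proposition~\ref{Prop: Fundamental group at infinity of surfaces in S_0 by P-R} is not virtually abelian. If it were virtually $\Z^3$, the prime $3$-manifold $M_Y$ would be finitely covered by $T^3$, so it would be Euclidean. If it were virtually cyclic or finite, $M_Y$ would be spherical or $S^2\times S^1$.
\item[(c)] \emph{Planned verification.} I would first identify $M_Y$ from the plumbing in \cite[Figure~3]{PR2021} as a torus bundle (or semi-bundle) whose monodromy is hyperbolic or of infinite-order nilpotent type. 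Then $\pi_1^\infty(Y)$ is a Sol or Nil lattice, and such lattices have no abelian subgroup of finite index.
\item[(d)] \emph{Backup check.} Failing a clean geometric identification, I would argue directly from the relations $\delta_1=[\gamma_2,\lambda^{-1}]$, $\delta_2=[\gamma_1,\lambda]$ with $\gamma_j=\delta_j^{d_j}$. The idea is that in a virtually abelian quotient, conjugation by $\lambda$ acts with finite order on a finite-index abelian subgroup, which forces the commutators, and then the $\delta_j$, to be torsion. This should contradict a computation of $H_1$ of a suitable finite cover, or the infinite order of $\lambda$.
\end{itemize}
This contradiction shows that $\pi_1(Y)$ cannot be finite.
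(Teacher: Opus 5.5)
Your Steps~1 and~2 match the paper's reduction to a simply connected cover lying in $\mathscr{S}_0$. Taking a connected component of $X\times_Y\tilde Y$ is a clean substitute for the paper's preliminary factorization. Step~3, however, has a genuine gap. It is only a plan, and the claim it aims at is false: $\pi_1^\infty$ of a surface in $\mathscr{S}_0$ \emph{can} be virtually abelian.

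Here is a counterexample. Let $V$ be $\PP^1\times\PP^1$ blown up at $(0,1)$ and $(1,0)$, with exceptional curves $E_1,E_2$. Let $D$ be the strict transform of $\{x_1=0\}\cup\{x_2=0\}\cup\{x_1=\infty\}\cup\{x_2=\infty\}$, and put $S:=V\setminus D$.
\begin{enumerate}[(i)]
\item $V$ is a sextic del Pezzo surface and $D\in|-K_V|$. So $S$ is affine and $K_V+D\sim0$, whence $\bar\kappa(S)=0$.
\item Let $A,B$ be the two ruling classes. The components of $D$ have classes $A-E_1$, $B-E_2$, $A$, $B$, which form a basis of $\Pic(V)$. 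Hence $\Pic(S)=0$ and $\C[S]^\ast=\C^\ast$.
\end{enumerate}
Thus $S\in\mathscr{S}_0$; it is $\C^\ast\times\C^\ast$ with two affine lines added. But $D$ is a cycle of rational curves with self-intersections $-1,-1,0,0$, the two $(-1)$-curves adjacent. So $M_S$ is the torus bundle with monodromy
\[
\begin{pmatrix}1&1\\-1&0\end{pmatrix}^{2}\begin{pmatrix}0&1\\-1&0\end{pmatrix}^{2}=\begin{pmatrix}0&-1\\1&1\end{pmatrix},
\]
which has order $6$. Hence $M_S$ is Euclidean, $\pi_1^\infty(S)\cong\Z^2\rtimes\Z$ contains $\Z^3$ with index $6$, and $T^3$ covers $M_S$ with degree $6$.

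Consequences for your plan:
\begin{enumerate}[(a)]
\item The finite-index statement from Step~3(a) excludes nothing for this $S$.
\item Your prediction in (c) of a Sol or Nil manifold is wrong here.
\item The mechanism in (d) fails as well. Already for $d_1=d_2=1$, the Pe{\l}ka--Ra{\'z}ny relations make $\langle\delta_1,\delta_2\rangle\cong\Z^2$ normal, with $\lambda$ acting by a matrix of trace $1$. So $\lambda$ acts with finite order while the commutators and the $\delta_j$ have infinite order.
\end{enumerate}
The paper's argument for the $\mathscr{S}_0$ step instead asserts that $f^\infty_\ast$ is \emph{surjective}, so that $\pi_1^\infty(Y)$ is abelian. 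Your nonzero-degree argument yields only finite index, which is not enough. Note also that a covering of the end of $Y$ need not extend to an \'etale cover of $Y$, so surjectivity is not automatic either.

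What is missing is an algebro-geometric input rather than topology at infinity. One way to close the gap:
\begin{enumerate}[(1)]
\item Take a completion $(\bar X,D_X)$ of $X$, obtained from $\PP^1\times\PP^1$ by blowing up boundary points, such that $f$ extends to $\bar f\colon\bar X\to\bar Y$. Properness gives $\bar f^{-1}(D_Y)=D_X$.
\item Iitaka's logarithmic ramification formula gives $K_{\bar X}+D_X=\bar f^\ast(K_{\bar Y}+D_Y)+R$, with $R\ge0$ and $R|_X$ the ramification divisor of $f$.
\item Since $\bar\kappa(Y)=0$, log abundance for surfaces makes the positive part of the Zariski decomposition $K_{\bar Y}+D_Y=P+N$ numerically trivial. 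Hence $\bar f^\ast N+R\equiv K_{\bar X}+D_X$.
\item The right-hand side is linearly equivalent to an effective divisor supported on exceptional curves lying in $D_X$, so its support is negative definite.
\item Two numerically equivalent effective divisors, one with negative definite support, coincide. So $\operatorname{Supp}R\subseteq D_X$, and $f$ is \'etale by purity.
\item Therefore $\pi_1(Y)\supseteq f_\ast\pi_1(X)\cong\Z^2$, contradicting finiteness.
\end{enumerate}
This proves the statement without passing through $\mathscr{S}_0$ at all.

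Separately, in Step~2(a) the exponential sequence identifies only the \emph{analytic} Picard group with $H^2(Y;\Z)$. For instance, $\Pic((E\setminus\{0\})\times\A^1)\cong E$ for an elliptic curve $E$, while $H^2=0$. Your conclusion $\Pic(Y)=0$ still holds: $\rho(Y)=0$ makes $\Pic(Y)$ torsion, and by the Kummer sequence $\Pic(Y)[n]$ is a quotient of $H^1_{\text{\'et}}(Y,\mu_n)=\operatorname{Hom}(\pi_1(Y),\Z/n\Z)=0$.
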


\begin{proof}
Let $X:=\C^*\times\C^*$ and let $f\colon X\to Y$ be a finite surjective
morphism. If $\deg(f)=1$, then $f$ is a finite birational morphism between
smooth affine surfaces. Hence, by Zariski's Main Theorem, $f$ is an
isomorphism. This is impossible since
$\pi_1(X)\cong\Z^2$, whereas $\pi_1(Y)$ is finite.
Therefore, $\deg(f)>1$.

Using the reduction in
Remark~\ref{Rem: pi_1 level map is surjective},
we factorize $f$ as $f=g\circ h$, where
$g\colon Z\to Y$ is a finite \'etale morphism and
$h\colon X\to Z$ is a finite surjective morphism satisfying
$h_\ast\colon\pi_1(X)\to\pi_1(Z)$ is surjective.

Since $\pi_1(X)\cong\Z^2$, the group $\pi_1(Z)$ is a finitely generated
abelian group. On the other hand, $g$ is a finite covering, so
$g_\ast(\pi_1(Z))$ is a finite-index subgroup of the finite group
$\pi_1(Y)$. Consequently, $\pi_1(Z)$ is finite.

Let $p\colon\widetilde{Z}\to Z$ be the universal covering. Since
$\pi_1(Z)$ is finite, the covering $p$ is finite. Form the fiber product
$\widetilde{X}:=X\times_Z\widetilde{Z}$. Then we obtain the commutative
diagram
\[
\begin{tikzcd}[column sep=5em,row sep=2em]
\widetilde{X}
\arrow[r,"h'"]
\arrow[d,"q"']
&
\widetilde{Z}
\arrow[d,"p"]
\\
X
\arrow[r,"h"']
&
Z
\arrow[phantom,from=1-1,to=2-2,
"{\raisebox{-0.15em}{\scalebox{2.2}{$\circlearrowleft$}}}" description]
\end{tikzcd}
\]

Since $h_\ast$ is surjective, the pullback
$q\colon\widetilde{X}\to X$ is connected. Moreover, $q$ is finite \'etale,
and hence Proposition~\ref{Prop: Finite covering of complement of a finite subset from 2-torus}
implies that $\widetilde{X}\cong\C^*\times\C^*$.
Therefore, $h'\colon\widetilde{X}\to\widetilde{Z}$ is again a finite
surjective morphism.

Since $\widetilde{X}$ is factorial,
Lemma~\ref{Lem: Finite morphism basic properties}(3) yields
$\rho(\widetilde{Z})=0$, and consequently
$\Cl(\widetilde{Z})$ is finite.
As $\widetilde{Z}$ is simply connected, it follows that
$\widetilde{Z}$ is in fact factorial and
$\Gamma(\widetilde{Z},\mathcal O_{\widetilde{Z}})^*=\C^*$.

Furthermore, since both $p$ and $g$ are finite \'etale morphisms, we have
$\bar\kappa(\widetilde{Z})=\bar\kappa(Z)=\bar\kappa(Y)=0$.
Hence $\widetilde{Z}$ belongs to the class $\mathscr{S}_0$ introduced in \S\ref{Subsec_Class S_0}.

The following proposition shows that no surface in $\mathscr{S}_0$ can be
properly dominated by $\C^*\times\C^*$. Since
$\widetilde{X}\cong\C^*\times\C^*$ properly dominates $\widetilde{Z}$, we
arrive at a contradiction. Hence no such $f$ exists.
\end{proof}

The preceding proposition reduces the problem to determining whether a surface in the class $\mathscr S_0$ can be properly dominated by $\C^*\times\C^*$. The next proposition shows that this never occurs.

We shall use the following theorem, proved independently by Gabai \cite{Gab1992} and Casson--Jungreis \cite{CJ1994}.

\begin{thm}[Seifert Fiber Space Theorem; cf.~\cite{Gab1992,CJ1994}]
\label{SFS Theorem}
Let $M$ be a compact, orientable, irreducible $3$-manifold with infinite fundamental group. Then $M$ is a Seifert fibre space if and only if $\pi_1(M)$ contains an infinite cyclic normal subgroup.
\end{thm}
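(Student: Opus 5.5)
The plan follows the route of Scott, Mess, Tukia, Gabai and Casson--Jungreis. The easy direction (\emph{only if}) comes first. Suppose $M$ is a compact orientable Seifert fibre space with $\pi_1(M)$ infinite. Let $h\in\pi_1(M)$ be the class of a regular fibre. Conjugating a regular fibre by any loop yields a fibre again, up to orientation, so $\langle h\rangle$ is normal. It remains to see that $h$ has infinite order. Consider the orbifold exact sequence
\[
\langle h\rangle\to\pi_1(M)\to\pi_1^{\rm orb}(B)\to 1.
\]
If $h$ had finite order, lifting to the universal cover would force the base orbifold $B$ to be spherical or bad, with finite orbifold group. That would contradict the infinitude of $\pi_1(M)$, except in the standard exceptional cases, which are excluded here by irreducibility.

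For the converse, let $N\cong\Z$ be normal in $G=\pi_1(M)$, with $M$ irreducible and $G$ infinite. First treat the Haken case. If $M$ contains an incompressible surface (for instance whenever $\partial M\neq\varnothing$ or $b_1(M)>0$), Waldhausen's theory together with the Scott/Casson--Gordon results on centralizers of normal $\Z$ subgroups shows that $M$ is Seifert fibred. So we may assume $M$ is closed and non-Haken. Then $M$ is aspherical by the sphere theorem and irreducibility, so its universal cover $\widetilde M$ is contractible. Next, apply Mess's reduction: from the normal $\Z$ one builds a line-bundle-like structure on $\widetilde M$. The quotient $\widetilde M/N$ is an open solid-torus-type manifold, and its space of ends gives an action of $Q=G/N$ on a circle $S^1$. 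One checks that this action is a \emph{convergence group} action; this uses the cocompactness of $G$ on $\widetilde M$ and the properness coming from asphericity.

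The decisive and hardest step is the Convergence Group Theorem: every convergence group acting on $S^1$ is conjugate (topologically) to a Fuchsian group. This was Tukia's theorem in most cases, finished by Gabai and independently by Casson--Jungreis. The approach I would try first is Casson--Jungreis's inductive one. Study the triples and limit set of $Q$; show that a finite-index subgroup acts freely. Produce an invariant "good" structure by cut-and-paste of quasi-geodesic chords, and reduce the complexity of the quotient orbifold until one recognizes a hyperbolic surface group. Once $Q$ is Fuchsian, we obtain a short exact sequence
\[
1\to\Z\to G\to Q\to 1
\]
with $Q$ a cocompact Fuchsian group, so $G$ is the fundamental group of some Seifert fibred $3$-manifold $M'$. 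Both $M$ and $M'$ are closed aspherical manifolds with isomorphic fundamental groups. By Scott's rigidity for Seifert manifolds (a homotopy equivalence to a Seifert fibred space with infinite $\pi_1$ is homotopic to a homeomorphism), $M\cong M'$, which is Seifert fibred. I expect the convergence group step to be the genuine obstacle; everything else is a reduction to it.
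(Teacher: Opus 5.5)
The paper gives no proof of this statement. It quotes it as a black box from Gabai and Casson--Jungreis, and applies it only to a closed orientable $M$ with $\pi_1(M)\cong\Z$. There the irreducible alternative is already impossible: a closed orientable irreducible $M$ with infinite $\pi_1$ is aspherical, and $\Z$ is not a $3$-dimensional Poincar\'e duality group.

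Your architecture is the one behind that citation: first the Haken case, then the closed aspherical non-Haken case via a circle action and the convergence group theorem, then Scott's rigidity. Your ``only if'' direction is fine, and it needs no exceptional cases. A finite-order fibre over a base with finite orbifold group would make $\pi_1(M)$ finite. A good base with infinite orbifold group has a finite surface cover over which the fibre is $\pi_1$-injective. For the Haken case, the results you want are Waldhausen's (nontrivial centre) and Gordon--Heil and Jaco--Shalen (normal $\Z$), not Casson--Gordon.

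\emph{The genuine gap is the step that produces the circle.}
\begin{itemize}
\item The group $Q=G/N$ acts properly and cocompactly on $\widetilde M/N$, so this space is quasi-isometric to $Q$. A posteriori $Q$ is a one-ended coarse plane, so the space of ends of $\widetilde M/N$ is a point, not a circle. Nor is it known in advance that $\widetilde M/N$ is an open solid torus.
\item The circle has to be the Gromov boundary of $Q$. To get it you need Mess's theorem that $Q$ is quasi-isometric to a complete Riemannian plane. This is a substantial coarse-topological result, not something one ``checks'' from cocompactness and asphericity.
\item Even granting Mess's theorem, the argument must split into two cases.
\begin{itemize}
\item If $Q$ is word-hyperbolic, then $\partial Q\cong S^1$ and $Q$ acts on it as a uniform convergence group. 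To make the action faithful, first enlarge $N$ to the preimage of the maximal finite normal subgroup of $Q$; this is still infinite cyclic because $G$ is torsion-free. Only then does the convergence group theorem apply.
\item If $Q$ is not hyperbolic, there is no faithful convergence action on a circle to feed into the theorem, since its conclusion would make $Q$ Fuchsian, hence hyperbolic. Instead you must show by a separate argument that $Q$ is virtually $\Z^2$. Then $G$ is virtually $\Z^3$ or virtually a lattice in $\mathrm{Nil}$, and Scott's theorem concludes.
\end{itemize}
\item The second case really occurs among non-Haken manifolds: Nil manifolds Seifert fibred over $S^2(3,3,3)$ have $b_1=0$ and are not Haken. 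So your claim that everything else reduces to the convergence group theorem is false as stated.
\end{itemize}
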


\begin{prop}
\label{Prop: No affine surface in S_0 can be properly dominated by 2-torus}
No smooth affine surface in the class $\mathscr{S}_0$ is properly dominated by $\C^*\times\C^*$.
\end{prop}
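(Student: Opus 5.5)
The plan is to argue by contradiction through the fundamental group at infinity. Suppose $f\colon X=\C^*\times\C^*\to S$ is finite and surjective with $S\in\mathscr S_0$. Since $f$ is proper, it maps neighbourhoods of infinity to neighbourhoods of infinity. Taking compatible exhaustions by compact sets and passing to the inverse limit therefore gives an induced homomorphism
\[
f_\ast^\infty\colon \pi_1^\infty(X)\to\pi_1^\infty(S).
\]

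\textbf{Step 1: the image of $f_\ast^\infty$ has finite index.} Outside a large compact set, $f$ is a finite branched covering of degree $\deg f$ between the ends. Its branch curve meets the $3$-manifold at infinity $M_S$ in a link $L$. Over $M_S\setminus L$, the map $f$ restricts to an honest finite covering. The image of $\pi_1(M_X\setminus f^{-1}L)$ then has finite index in $\pi_1(M_S\setminus L)$, and the latter surjects onto $\pi_1(M_S)$. This is the same argument as Serre's finite-index statement used in Remark~\ref{Rem: pi_1 level map is surjective}, but carried out on the ends. By Proposition~\ref{Prop: Fundamental group at infinity of 2-torus}, $\pi_1^\infty(X)\cong\Z^3$. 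Hence $G:=\pi_1^\infty(S)$ contains a finite-index abelian subgroup, which is a quotient of $\Z^3$.

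\textbf{Step 2: $M_S$ would be flat.} By Neumann's theorem, $M_S$ is prime, so it is irreducible or $S^2\times S^1$. First I exclude the cases where $G$ is finite or virtually $\Z$. For this I abelianize the presentation in Proposition~\ref{Prop: Fundamental group at infinity of surfaces in S_0 by P-R}. The relations $\delta_1=[\gamma_2,\lambda^{-1}]$ and $\delta_2=[\gamma_1,\lambda]$ kill $\delta_1,\delta_2$, so $H_1(M_S;\Z)\cong\Z$, generated by $\lambda$. Next I show that $G$ is not virtually cyclic, for instance by exhibiting $\gamma_1$ and $\lambda$ as generating a non-virtually-cyclic subgroup. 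It follows that $M_S$ is irreducible and $G$ is infinite and virtually $\Z^3$. A finite-index $\Z^3$ subgroup contains an infinite cyclic subgroup that is normal in $G$ after passing to its normal core. Theorem~\ref{SFS Theorem} then makes $M_S$ a Seifert fibre space, and the virtually-$\Z^3$ condition forces Euclidean geometry. Thus $M_S$ is a closed orientable flat $3$-manifold with $b_1=1$ and $H_1\cong\Z$.

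\textbf{Step 3 (main obstacle): contradiction with the plumbing structure.} The flat manifolds with $b_1=1$ and torsion-free $H_1$ essentially reduce to the $\Z/6$-rotation torus bundle. So homology alone does not finish the argument, and the remaining work is to show that $M_S$ is not Euclidean. My first attempt would use the JSJ structure. The boundary graph of the completion of $S_{p_1,p_2}$ in \cite{PR2021} has two branch vertices with genuinely hyperbolic-type Seifert pieces, coming from the $(-d_j)$-framed handles. Consequently $M_S$ has a non-trivial JSJ torus, whereas a flat manifold has none (or at most a Sol/Nil-type one). Failing that, I would find a quotient of $G$ that is not virtually abelian. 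A natural candidate is to set $\delta_1=\delta_2=1$ and send $\gamma_j$ to elements of orders dividing $d_j$ in a triangle or dihedral-type group, using that $\gamma_j=\delta_j^{d_j}$. Any such quotient contradicts Step~1.

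The case where $d_1$ or $d_2$ is small (where $M_S$ might degenerate to a Seifert manifold) must be checked separately against the explicit list of Seifert invariants. This is where I expect the main difficulty.
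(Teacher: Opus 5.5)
\textbf{Step~1 is fine, but Step~3 cannot be completed.} Your Step~1 is correct, and it is more cautious than the paper. The paper asserts that $f_\ast^\infty$ is \emph{surjective}, by factoring $f$ through a finite \'etale cover of $Y$, which is trivial because $Y\simeq S^2$. Then $\pi_1^\infty(Y)$ is itself abelian, hence $\cong\Z$ by the Pe{\l}ka--Ra{\'z}ny presentation, and a Seifert/plumbing argument finishes. With only a finite-index image of $\Z^3$, the statement you need in Step~3 (that $M_S$ is not the order-$6$ torus bundle) is false for some members of $\mathscr S_0$.

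Here is an example. Let $V$ be the blow-up of $\PP^2$ at one non-vertex point on each of two sides $L_1,L_2$ of a triangle $L_0+L_1+L_2$. Put $D=L_0+\tilde L_1+\tilde L_2$ and $S_0=V\setminus D$.
\begin{itemize}
\item $D\sim -K_V$ is ample ($V$ is del Pezzo of degree $7$), so $S_0$ is affine with $\bar\kappa(S_0)=0$.
\item The classes of $L_0,\tilde L_1,\tilde L_2$ generate $\Pic V$, so $\Pic S_0=0$.
\item A unit of $S_0$ restricts to $c\,x_1^ax_2^b$ on $\G_m^2\subset S_0$, and its divisor on $S_0$ is $aE_1+bE_2$ (with $E_j$ the exceptional curves). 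Hence $\C[S_0]^\ast=\C^\ast$.
\end{itemize}
Thus $S_0\in\mathscr S_0$. Its boundary is a cycle of rational curves with weights $(1,0,0)$. Wagreich's presentation then gives $\pi_1^\infty(S_0)\cong\Z^2\rtimes_A\Z$ with $A=\bigl(\begin{smallmatrix}1&1\\-1&0\end{smallmatrix}\bigr)$ of order $6$. So $M_{S_0}$ is exactly the flat manifold you isolate: it is $6$-fold covered by $T^3$, and $\Z^3$ sits in $\pi_1^\infty(S_0)$ with index $6$. The Pe{\l}ka--Ra{\'z}ny presentation with $d_1=d_2=1$ is this group: setting $x=\delta_1$, $y=\lambda^{-1}\delta_2\lambda$, it becomes $\langle x,y,\lambda\mid [x,y]=1,\ \lambda x\lambda^{-1}=y^{-1},\ \lambda y\lambda^{-1}=xy\rangle$.

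So nothing built only on Step~1 can reach a contradiction, and your JSJ heuristic fails for $S_0$. Your fallback quotient is inconsistent anyway: $\delta_j\mapsto 1$ forces $\gamma_j=\delta_j^{d_j}\mapsto 1$. Conversely, $\gamma_j\mapsto 1$ kills $\delta_j$ through the relations, leaving only $\Z$.

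\textbf{What is missing.} You need an input that is not visible in $\pi_1^\infty$ up to finite index. Genuine surjectivity of $f_\ast^\infty$ would do. Note, however, that the paper's factorization needs the finite-index subgroup to come from a finite \'etale cover of $Y$, and this is not automatic: $S_0$ is simply connected, yet $\pi_1^\infty(S_0)$ has subgroups of index $6$.

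A route that avoids infinity altogether is Iitaka's logarithmic ramification formula. For completions with $\bar f^{-1}(D_S)=D_X$ one has $K_{\bar X}+D_X=\bar f^\ast(K_{\bar S}+D_S)+R_f$, with $R_f\ge 0$ and $R_f|_X$ the ramification divisor of $f$.
\begin{itemize}
\item Since $\bar\kappa(S)=0$, pick $0\neq\sigma\in H^0(\bar S,m(K_{\bar S}+D_S))$.
\item Then $\bar f^\ast\sigma$ is a nonzero log $m$-canonical form on $\C^\ast\times\C^\ast$, hence a nonzero multiple of $(dz\wedge dw/zw)^{\otimes m}$, which has no zeros on $X$.
\item Therefore $R_f|_X=0$, so $f$ is \'etale.
\item This gives $0=e(\C^\ast\times\C^\ast)=\deg f\cdot e(S)=2\deg f$, a contradiction.
\end{itemize}
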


\begin{proof}
Suppose, to the contrary, that there exists a finite surjective morphism
$f\colon X\to Y$, where $X=\C^*\times\C^*$ and $Y\in\mathscr{S}_0$.

Since $f$ is finite, properness induces a homomorphism
$f_\ast^\infty\colon\pi_1^\infty(X)\to\pi_1^\infty(Y)$ by
\cite[Proposition~2.8.8]{GMM2021}. Moreover, by Serre's theorem
(cf.~\cite{Ser1959}), its image has finite index in
$\pi_1^\infty(Y)$. Suppose that $f_\ast^\infty$ is not surjective.
Then, by the factorization argument described in
Remark~\ref{Rem: pi_1 level map is surjective}, there exists a commutative diagram
\[
\begin{tikzcd}[column sep=3em,row sep=2em]
&
S \arrow[dr,"p"] &
\\
X \arrow[ur,"g"] \arrow[rr,"f"'] &&
Y
\\[-5em]
&
{\scalebox{2}{$\circlearrowright$}}
\end{tikzcd}
\]
where $p\colon S\to Y$ is a connected finite \'etale morphism,
$g\colon X\to S$ is a finite surjective morphism,
$f=p\circ g$, and
$g_\ast^\infty\colon\pi_1^\infty(X)\to\pi_1^\infty(S)$
is surjective (see also the proof of
\cite[Lemma~2.8.5]{GMM2021} for details). Since
$Y\in\mathscr{S}_0$, it follows from
\cite[Proposition~4.1]{PR2021} that $Y$ is homotopy equivalent to $S^2$, and hence simply connected. Therefore the connected finite \'etale covering
$p\colon S\to Y$ is trivial, so $p$ is an isomorphism. Consequently,
$f_\ast^\infty=g_\ast^\infty$ is surjective, a contradiction. Thus
$f_\ast^\infty\colon\pi_1^\infty(X)\to\pi_1^\infty(Y)$ is surjective.

Since $\pi_1^\infty(X)\cong \Z^3$ by
Proposition~\ref{Prop: Fundamental group at infinity of 2-torus},
it follows that $\pi_1^\infty(Y)$ is abelian.

\medskip

We first determine the structure of
$\pi_1^\infty(Y)$.

\medskip

\noindent\textbf{Claim.}
\emph{If $Y\in\mathscr{S}_0$ and $\pi_1^\infty(Y)$ is abelian, then
$\pi_1^\infty(Y)\cong\Z$.}

\smallskip

\noindent\emph{Proof of the claim.}
By Proposition~\ref{Prop: Fundamental group at infinity of surfaces in S_0 by P-R},
$\pi_1^\infty(Y)$ admits the presentation
\[
\pi_1^\infty(Y)=
\left\langle
\delta_1,\delta_2,\lambda
\;\middle|\;
\delta_1=[\gamma_2,\lambda^{-1}],
\;
\delta_2=[\gamma_1,\lambda],
\;
[\gamma_1,\gamma_2]=1
\right\rangle,
\]
where $\gamma_i=\delta_i^{\,d_i}$ for $i=1,2$, following the notation of \cite{PR2021}.

Since $\pi_1^\infty(Y)$ is abelian, the generators
$\delta_1$, $\delta_2$, and $\lambda$ commute pairwise. Hence
$[\gamma_1,\lambda]=[\delta_1^{d_1},\lambda]=1$ and
$[\gamma_2,\lambda^{-1}]=[\delta_2^{d_2},\lambda^{-1}]=1$.
Using the defining relations, we obtain
$\delta_1=\delta_2=1$, and consequently
$\gamma_1=\gamma_2=1$. Hence the presentation simplifies to
$\pi_1^\infty(Y)=\langle\lambda\rangle$.
Since there is no relation involving $\lambda$, it follows that
$\pi_1^\infty(Y)\cong\Z$.
This proves the claim.

\medskip

By definition,
$\pi_1(M_Y)=\pi_1^\infty(Y)\cong\Z$, where $M_Y$ denotes the
$3$-manifold at infinity of $Y$. By the discussion in
Section~\ref{subsec:fundamental-group-at-infinity}, $M_Y$ is prime. Hence,
by \cite[Lemma~3.13]{Hem1976}, $M_Y$ is either irreducible or homeomorphic to
$S^2\times S^1$.

If $M_Y$ is irreducible, then $\pi_1(M_Y)\cong\Z$ contains an infinite cyclic
normal subgroup, namely itself. Therefore, by the Seifert Fiber Space Theorem
(Theorem~\ref{SFS Theorem}), $M_Y$ is Seifert fibered. If
$M_Y\cong S^2\times S^1$, then $M_Y$ is also Seifert fibered. Thus, in either
case, $M_Y$ is a Seifert fibered $3$-manifold.

Applying Neumann's plumbing calculus \cite{Neu1981} to the Seifert fibered
manifold $M_Y$, we conclude that the boundary divisor $D$ of a smooth SNC
compactification of $Y$ is irreducible and isomorphic to $\PP^1$.
By Mumford's presentation (Proposition~\ref{prop: Mumford's presentation for tree}),
we have
\[
\pi_1^\infty(Y)\cong
\langle e\mid e^{\,D^2}=1\rangle,
\]
where $e$ denotes the meridian corresponding to the unique irreducible
component $D$ of the boundary divisor. Since
$\pi_1^\infty(Y)\cong\Z$, it follows that $D^2=0$; otherwise,
$\pi_1^\infty(Y)$ would be finite.

On the other hand, since $Y$ is affine, the boundary divisor is supported on an
ample divisor. Hence $D^2>0$, a contradiction.

Therefore no finite surjective morphism
$f\colon\C^*\times\C^*\to Y$ exists. Consequently, no smooth affine surface in
the class $\mathscr{S}_0$ is properly dominated by $\C^*\times\C^*$.
\end{proof}

The next step is to rule out smooth affine surfaces with an abelian
fundamental group of rank $1$ and positive Euler characteristic.

\begin{prop}\label{Prop: No finite map from 2-torus to surface having infinite cyclic pi_1 and positive Euler characteristic}
No smooth affine surface with fundamental group isomorphic to $\Z$ and positive
Euler characteristic can be properly dominated by $\C^*\times\C^*$.
\end{prop}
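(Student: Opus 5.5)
The plan is to play the Betti number bound of Lemma~\ref{Lem: Finite morphism basic properties}(2) against the multiplicativity of the Euler characteristic under finite topological coverings. Suppose, to the contrary, that $f\colon X=\C^*\times\C^*\to Y$ is finite surjective, with $\pi_1(Y)\cong\Z$ and $e(Y)>0$.

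First I apply the reduction of Remark~\ref{Rem: pi_1 level map is surjective}. This factors $f=g\circ h$, where $g\colon Z\to Y$ is a connected finite \'etale covering and $h\colon X\to Z$ is finite surjective with $h_\ast$ surjective on $\pi_1$. Since $g_\ast\pi_1(Z)$ is a finite-index subgroup of $\Z$, we get $\pi_1(Z)\cong\Z$. Since $g$ is a finite topological covering of degree $\deg g$, we get $e(Z)=\deg(g)\,e(Y)>0$.

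By the Andreotti--Frankel theorem, a smooth affine surface has the homotopy type of a CW complex of real dimension $\le 2$. Hence $b_3=b_4=0$, and for any smooth affine surface $W$ with $\pi_1(W)\cong\Z$ we have
\[
e(W)=1-1+b_2(W)=b_2(W).
\]

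Next, for an integer $m\ge2$, let $p_m\colon Z_m\to Z$ be the connected $m$-fold cyclic covering corresponding to $m\Z\subseteq\pi_1(Z)\cong\Z$. By the Generalized Riemann Existence Theorem this covering is algebraic and finite \'etale, so $Z_m$ is a smooth affine surface. Moreover $\pi_1(Z_m)\cong\Z$ and $e(Z_m)=m\,e(Z)$.

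Form the fiber product $V_m:=X\times_Z Z_m$, exactly as in the proof of Proposition~\ref{Prop: Affine surface properly dominated by 2-torus and with finite pi_1}. Because $h_\ast$ is surjective, the covering $V_m\to X$ is connected. It is also finite \'etale, so Proposition~\ref{Prop: Finite covering of complement of a finite subset from 2-torus} gives $V_m\cong\C^*\times\C^*$. The projection $V_m\to Z_m$ is again finite surjective.

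Lemma~\ref{Lem: Finite morphism basic properties}(2) then gives $b_2(Z_m)\le b_2(\C^*\times\C^*)=1$. On the other hand, the formula above gives
\[
b_2(Z_m)=e(Z_m)=m\,e(Z)\ge m\ge2,
\]
a contradiction. Hence no such $f$ exists.

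In fact $m=2$ already suffices. Even without passing to $Z_m$, this argument forces $b_2(Z)=1$, so $e(Z)=1$, and the double cover then breaks the bound.

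The main point to check carefully is the identity $e(W)=b_2(W)$. This needs the vanishing of $b_3$ and $b_4$ for affine surfaces, which Andreotti--Frankel supplies, and $b_1=1$, which follows from $\pi_1\cong\Z$. The second point is that the pulled-back cover $V_m$ is connected. This is exactly where the surjectivity of $h_\ast$ from Remark~\ref{Rem: pi_1 level map is surjective} is used: the preimage of $m\Z$ in $\pi_1(X)$ then has index $m$, so the fiber product is a single connected $m$-sheeted cover. Everything else consists of routine applications of the earlier results.
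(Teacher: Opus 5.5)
Your proof is correct and is essentially the paper's argument: you make $\pi_1$ of the source surject onto $\pi_1$ of the target, pull back a connected cyclic cover (the paper uses the double cover, as in your final remark) to get another copy of $\C^*\times\C^*$, and contradict $b_2(\C^*\times\C^*)=1$ via $e=b_2\ge 2$ and Lemma~\ref{Lem: Finite morphism basic properties}(2). The only difference is cosmetic: the paper gets surjectivity of $f_\ast$ by choosing $f$ of minimal degree, whereas you pass directly to the intermediate surface $Z$ of Remark~\ref{Rem: pi_1 level map is surjective}, which is the same reduction done slightly more directly.
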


\begin{proof}
Let $X=\C^*\times\C^*$. Assume, to the contrary, that there exists a finite surjective morphism
$f\colon X\to Y$, where 
$\pi_1(Y)\cong\Z$, and $e(Y)>0$.

If $\deg(f)=1$, then $f$ is a finite birational morphism between smooth affine
surfaces. Hence, by Zariski's Main Theorem, $f$ is an isomorphism. This is
impossible, since $\pi_1(X)\cong\Z^2$ whereas $\pi_1(Y)\cong \Z$. Therefore, $\deg(f)>1$.

Choose such a finite surjective morphism of minimal degree, and denote its degree by $d>1$.

\medskip

\noindent\textbf{Claim.}
\emph{The induced homomorphism
$f_\ast\colon\pi_1(X)\to\pi_1(Y)$ is surjective.}

\smallskip
\noindent\emph{Proof of the claim.}
Suppose that $f_\ast$ is not surjective. By
Remark~\ref{Rem: pi_1 level map is surjective}, the morphism $f$ factors as $g\circ h$ where $g\colon Z \to Y$ is a finite \'etale morphism,
$h\colon X \to Z$ is a finite surjective morphism, and
$h_\ast\colon\pi_1(X)\to\pi_1(Z)$ is surjective. Since $[\pi_1(Y):f_\ast(\pi_1(X))]<\infty$
(cf.~\cite{Ser1959}) and $\pi_1(Y)\cong\Z$, we obtain
$\pi_1(Z)\cong\Z$.
Moreover,
\[
e(Z)=\deg(g)\,e(Y)>0.
\]
As $\deg(h)<d$, this contradicts the minimality of $d$.
Hence the claim follows.

\medskip

Let $p\colon Y'\to Y$ be a connected $2$-fold covering.
Then $Y'$ is again a smooth affine surface with
$\pi_1(Y')\cong\Z$.
Since
$b_0(Y')=1$, $b_1(Y')=1$, and $b_i(Y')=0$ for $i\ge3$,
we have $e(Y')=1-1+b_2(Y')=b_2(Y')$. Furthermore,
\[
e(Y')=\deg(p)\,e(Y)=2e(Y)\ge2,
\]
so that
$b_2(Y')\ge2$.

Since $f_\ast$ is surjective, the covering $p$ pulls back to a connected finite
covering, say $q\colon X' \to X$, via $f$ with the following commutative diagram
\[
\begin{tikzcd}[column sep=5em,row sep=2em]
X' \arrow[r,"q"] \arrow[d,"f'"'] &
X \arrow[d,"f"]
\\
Y' \arrow[r,"p"'] &
Y
\arrow[phantom,from=1-1,to=2-2,
"{\raisebox{-0.15em}{\scalebox{2.2}{$\circlearrowleft$}}}" description]
\end{tikzcd}
\]
The morphism $q\colon X'\to X$ is a finite unramified covering. Since every connected
finite unramified covering of $\C^*\times\C^*$ is again isomorphic to
$\C^*\times\C^*$ (Proposition~\ref{Prop: Finite covering of complement of a finite subset from 2-torus}),
we have
$X'\cong\C^*\times\C^*$.
Consequently, $b_2(X')=1$, whereas $b_2(Y')\ge2$. This contradicts Lemma~\ref{Lem: Finite morphism basic properties}(2) when applied to the finite surjective morphism $f'\colon X'\to Y'$. Hence no smooth affine surface with fundamental group isomorphic to $\Z$ and
positive Euler characteristic can be properly dominated by
$\C^*\times\C^*$.
\end{proof}

\begin{cor}\label{Cor: No finite map from 2-torus to surface having rank 1 abelian pi_1 and positive Euler characteristic}
No smooth affine surface with an abelian fundamental group of rank $1$ and
positive Euler characteristic can be properly dominated by
$\C^*\times\C^*$.
\end{cor}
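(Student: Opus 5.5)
The plan is to reduce the corollary to Proposition~\ref{Prop: No finite map from 2-torus to surface having infinite cyclic pi_1 and positive Euler characteristic} by passing to a finite \'etale cover that kills the torsion. Suppose $f\colon X\to Y$ is finite surjective, with $X=\C^*\times\C^*$, $\pi_1(Y)$ abelian of rank $1$ and $e(Y)>0$. Since $\pi_1(Y)$ is finitely generated (as $Y$ is a smooth affine variety, hence has the homotopy type of a finite CW complex), it has the form $\pi_1(Y)\cong\Z\oplus T$ with $T$ finite. Let $\Gamma\subseteq\pi_1(Y)$ be the subgroup $\Z\oplus\{0\}$, which has finite index $|T|$. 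It is infinite cyclic.

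First, let $g\colon Z\to Y$ be the connected finite topological covering corresponding to $\Gamma$. By the Generalized Riemann Existence Theorem, $g$ is a finite \'etale morphism of algebraic varieties. Hence $Z$ is a smooth affine surface with $\pi_1(Z)\cong\Z$, and
\[
e(Z)=|T|\,e(Y)>0.
\]

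Next, I pull $g$ back along $f$. The fiber product $X\times_Y Z$ is a finite \'etale cover of $X$; choose a connected component $X'$. Then the projection $X'\to X$ is a connected finite \'etale cover, so $X'\cong\C^*\times\C^*$ by Proposition~\ref{Prop: Finite covering of complement of a finite subset from 2-torus}. The projection $f'\colon X'\to Z$ is finite, as the base change of $f$ composed with a closed-and-open immersion. It is also surjective: it is finite, so its image is closed, and since it is quasi-finite its image has dimension $2$, so the image equals the irreducible surface $Z$. (Alternatively, one may first apply Remark~\ref{Rem: pi_1 level map is surjective} so that the fiber product is already connected, but taking a component avoids this.)

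Finally, $Z$ is a smooth affine surface with $\pi_1(Z)\cong\Z$ and positive Euler characteristic that is properly dominated by $\C^*\times\C^*$. This contradicts Proposition~\ref{Prop: No finite map from 2-torus to surface having infinite cyclic pi_1 and positive Euler characteristic}. The only points needing care are the multiplicativity of $e$ under finite unramified covers and the surjectivity of $f'$ onto $Z$. Both are routine, so I expect no serious obstacle; the main thing to get right is that a rank-$1$ finitely generated abelian group contains an infinite cyclic subgroup of finite index.
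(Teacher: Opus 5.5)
Your proposal is correct and follows the paper's own argument. You pass to the finite \'etale cover corresponding to the $\Z$ summand of $\pi_1(Y)\cong\Z\oplus T$, use $e(Z)=|T|\,e(Y)>0$, and pull the domination back (a connected finite \'etale cover of $\C^*\times\C^*$ is again $\C^*\times\C^*$) to contradict Proposition~\ref{Prop: No finite map from 2-torus to surface having infinite cyclic pi_1 and positive Euler characteristic}. Taking a connected component of $X\times_Y Z$, instead of first reducing to a surjective $f_\ast$ as the paper's ``earlier argument'' does, is a harmless and slightly cleaner way to fill in the pullback step.
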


\begin{proof}
Let $Y$ be such a surface. Since $\pi_1(Y)$ is a finitely generated abelian
group of rank $1$, using the structure theorem, we have $\pi_1(Y)\cong \Z\oplus T$, where $T$ is a finite abelian group. Passing to the finite \'etale covering corresponding to the subgroup $\Z \hookrightarrow \Z\oplus T\cong \pi_1(Y)$ of finite index $|T|$, we obtain a smooth affine surface $Y'$ with $\pi_1(Y')\cong\Z$.
Moreover, $e(Y')=|T|\,e(Y)>0$, as $e(Y)>0$. If $Y$ were properly dominated by $\C^*\times\C^*$, then so would $Y'$ using the earlier argument (see the proof of Proposition \ref{Prop: Affine surface properly dominated by 2-torus and with finite pi_1}). This contradicts Proposition~\ref{Prop: No finite map from 2-torus to surface having infinite cyclic pi_1 and positive Euler characteristic}. This completes the proof.
\end{proof}

We are now in a position to complete the classification of smooth affine
surfaces of logarithmic Kodaira dimension zero that are properly dominated by
$\C^*\times\C^*$. Combining the preceding propositions with Kojima's
classification of strongly minimal affine surfaces of logarithmic Kodaira
dimension zero yields the following theorem.

\begin{thm}\label{Thm: Affine surfaces with kappa=0 properly dominated by 2-torus}
Let $Y$ be a smooth affine surface properly dominated by
$\C^*\times\C^*$ with $\bar\kappa(Y)=0$. Then $Y$ is isomorphic to one of the
following:
\begin{enumerate}[\rm(1)]
\item $\C^*\times\C^*$;
\item Fujita's surface $H[-1,0,-1]$.
\end{enumerate}
\end{thm}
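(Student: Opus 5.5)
We argue in the same spirit as for Theorem~\ref{Thm: Affine surfaces with kappa negative properly dominated by 2-torus}. Let $f\colon X=\C^*\times\C^*\to Y$ be finite surjective, and factor $f=g\circ h$ as in Remark~\ref{Rem: pi_1 level map is surjective}. Here $g\colon Z\to Y$ is finite \'etale and $h_\ast\colon\pi_1(X)\to\pi_1(Z)$ is surjective. Then $\pi_1(Z)$ is a quotient of $\Z^2$, hence abelian of rank at most $2$, and $\bar\kappa(Z)=0$. By Proposition~\ref{Prop: Affine surface properly dominated by 2-torus and with finite pi_1} applied to $Z$, the group $\pi_1(Z)$ is infinite, so its rank is $1$ or $2$. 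By Lemma~\ref{Lem: Finite morphism basic properties}(2),(3) we also get $\rho(Z)=0$, $b_1(Z)\le 2$ and $b_2(Z)\le b_2(X)=1$.

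\textbf{Rank-one case.} Here $b_1(Z)=1$ and $b_2(Z)\le1$. Corollary~\ref{Cor: No finite map from 2-torus to surface having rank 1 abelian pi_1 and positive Euler characteristic} forces $e(Z)\le 0$, so $e(Z)=0$ and $b_2(Z)=0$.

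\textbf{Rank-two case.} Here $b_1(Z)=2$. Taking the finite \'etale cover $Z'$ corresponding to a free rank-two subgroup $\Z^2\subseteq\pi_1(Z)$, and pulling back to $X$ as in the earlier proofs (using Proposition~\ref{Prop: Finite covering of complement of a finite subset from 2-torus}), we may assume $\pi_1(Z)\cong\Z^2$. In this case the surjection $h_\ast\colon\Z^2\to\Z^2$ is an isomorphism.

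In both cases we now invoke Kojima's classification of (strongly minimal) smooth affine surfaces with $\bar\kappa=0$, together with Fujita's list. The plan is to check that the surfaces with $\rho=0$, $b_2\le 1$ and infinite abelian $\pi_1$ are exactly $\C^*\times\C^*$, $H[-1,0,-1]$, and possibly surfaces with a $\C^*$-fibration over a rational curve. For the latter we argue via the Suzuki--Zaidenberg formula (Theorem~\ref{Suzuki's formula}) as in Theorem~\ref{Thm: Affine surfaces with kappa negative properly dominated by 2-torus}. Multiple fibres are removed by the ramified covering trick. Reducible fibres are excluded because $\rho=0$ (Lemma~\ref{Lem: Q-factorial fibration}). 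Then $e=0$ gives a $\G_m$-bundle over an affine rational curve $B$. The JSXZ lemma then shows that, after an \'etale cover of degree at most $2$, it is trivial. Finally, $\bar\kappa=0$ forces $B\cong\C^*$.

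\textbf{Descending to $Y$.} This shows that $Z$ is $\C^*\times\C^*$ or a $\G_m$-bundle over $\C^*$ with twisted double cover $\C^*\times\C^*$. In either case $Z$ is a $K(\pi,1)$ with $e(Z)=0$, and $Y$ is covered by $Z$ with finite fibres. So $Y$ is a non-contractible smooth affine $K(\pi,1)$-surface with $\bar\kappa(Y)=0$, $e(Y)=0$ and $\rho(Y)=0$. We then apply the classification of such surfaces in \cite[Result~2.1]{GGH2023}. Among surfaces with $\bar\kappa=0$, that list should reduce to $\C^*\times\C^*$ and $H[-1,0,-1]$, where the latter is the twisted $\C^*$-bundle over $\C^*$ admitting the \'etale double cover of \cite[Proposition~5.8]{GGH2023}. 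Candidates with nontrivial Picard group, or with $\pi_1$ not virtually $\Z^2$, are eliminated by $\rho(Y)=0$ and by $\pi_1(Y)$ containing $\pi_1(Z)$ with finite index.

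\textbf{Main obstacle.} I expect the hardest step to be the application of Kojima's and Fujita's classification in the rank-one case. There one must exclude exotic $\bar\kappa=0$ surfaces with $\pi_1$ of rank $1$ and $e=0$, such as the $H[k,-k]$-type surfaces or surfaces with $\C^*$-fibrations having a single special fibre. The key tools here are the vanishing $\rho=0$ and the bound $b_2\le1$. The other tool is the $\pi_1^\infty$-surjectivity argument of Proposition~\ref{Prop: No affine surface in S_0 can be properly dominated by 2-torus}, which forces $\pi_1^\infty$ to be a quotient of $\Z^3$. I would first try to combine these to kill each remaining Kojima type individually.
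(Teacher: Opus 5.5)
Your reduction agrees with the paper: you factor $f=g\circ h$ with $h_\ast$ surjective, so $\pi_1(Z)$ is an infinite quotient of $\Z^2$, $\rho(Z)=0$ and $b_2(Z)\le b_2(X)=1$, and in the rank-one case $e(Z)=0$ by Corollary~\ref{Cor: No finite map from 2-torus to surface having rank 1 abelian pi_1 and positive Euler characteristic}. Your final step also agrees: show $Z$, hence $Y$, is a $K(\pi,1)$ and cite \cite{GGH2023}.

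The gap is the middle step, which is the real content of the theorem. You only announce a ``plan to check'' Kojima's list, with a hedged outcome. You are also missing the tool that makes that list usable: \cite[Table~1]{Koj1999} classifies \emph{strongly minimal} surfaces, and $Z$ need not be one. The paper passes to a strongly minimal model $Z'\subseteq Z$. By \cite[Lemma~1.4]{Koj1999}, $Z\setminus Z'$ is a disjoint union of affine lines, so $\pi_1(Z')\twoheadrightarrow\pi_1(Z)$ and $e(Z)=e(Z')+\#\{\text{lines}\}$, with equality iff $Z=Z'$. The argument then closes quickly:
\begin{enumerate}
\item Since $\pi_1(Z')$ is infinite, $Z'$ is one of $O(4,1),O(2,2),O(1,1,1),H[-1,0,-1],H[0,0]$.
\item For the three types with $\pi_1\cong\Z$, one gets $\pi_1(Z)\cong\Z$ and $e(Z)\ge e(Z')>0$. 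This contradicts Proposition~\ref{Prop: No finite map from 2-torus to surface having infinite cyclic pi_1 and positive Euler characteristic}.
\item For $O(1,1,1)$ and $H[-1,0,-1]$ one has $e(Z')=0$. Here $e(Z)>0$ is impossible: rank one is excluded by the Corollary, and rank two would force $b_2(Z)\ge 2>b_2(X)$. Hence $Z=Z'$.
\item $\pi_1(H[-1,0,-1])=\langle y,t\mid yty^{-1}=t^{-1}\rangle$ is non-abelian, so $Z\cong\C^*\times\C^*$.
\end{enumerate}
In particular, $H[-1,0,-1]$ does not belong on your list of candidates for $Z$ ``with infinite abelian $\pi_1$''. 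Your rank-one case is in fact empty, which your proposal never establishes.

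Your $\C^*$-fibration branch does not fill this gap. Nothing in your argument produces a $\C^*$-fibration on $Z$; it was supposed to come out of the unperformed Kojima check. The conditions $\rho(Z)=0$, $b_2(Z)\le 1$ and infinite abelian $\pi_1$ do not by themselves give one. Given such a fibration, the branch could be completed as follows:
\begin{enumerate}
\item Suzuki--Zaidenberg with $e(\C^*)=0$ gives $e(Z)\ge 0$, hence $e(Z)=0$ and $(F_i)_{\rm red}\cong\C^*$. Multiple fibres can still remain.
\item Removing them by the ramified covering trick replaces $Z$ by a finite \'etale cover. On that cover, $\rho=0$ must be re-established by re-dominating with $\C^*\times\C^*$, as in Theorem~\ref{Thm: Affine surfaces with kappa negative properly dominated by 2-torus}.
\item Asphericity of the cover then descends to $Z$.
\end{enumerate}
But the starting point is missing. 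Likewise, your proposed use of $\pi_1^\infty$ is unsupported. In Proposition~\ref{Prop: No affine surface in S_0 can be properly dominated by 2-torus}, surjectivity of $f^\infty_\ast$ came from $Y\in\mathscr{S}_0$ being simply connected, and that is not available for $Z$.
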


\begin{proof}
Let
$f\colon X\to Y$
be a finite surjective morphism, where
$X=\C^*\times\C^*$.

If $\deg(f)=1$, then $f$ is a finite birational morphism between smooth affine
surfaces. Hence, by Zariski's Main Theorem, $f$ is an isomorphism. Therefore,
$Y\cong\C^*\times\C^*$, and we are done. Thus, we may assume that
$\deg(f)>1$.

By Remark~\ref{Rem: pi_1 level map is surjective}, we may factor $f$ as
\[
\begin{tikzcd}[column sep=3em,row sep=2em]
&
Z \arrow[dr,"g"] &
\\
X \arrow[ur,"h"] \arrow[rr,"f"'] &&
Y
\\[-5em]
&
{\scalebox{2}{$\circlearrowright$}}
\end{tikzcd}
\]
where $g$ is a finite \'etale morphism,
$h$ is a finite surjective morphism,
$f=g\circ h$, and
$h_\ast\colon\pi_1(X)\to\pi_1(Z)$
is surjective.

Since $g$ is a finite \'etale covering, $\pi_1(Z)$ is finite if and only if
$\pi_1(Y)$ is finite. The latter is impossible by
Proposition~\ref{Prop: Affine surface properly dominated by 2-torus and with finite pi_1}.
Hence $\pi_1(Z)$ is infinite.

Let $Z'$ be a strongly minimal affine surface obtained from $Z$ by repeatedly
contracting exceptional curves. Since logarithmic Kodaira dimension is
preserved under such contractions,
$\bar\kappa(Z')=\bar\kappa(Z)=0$.
Therefore, by Kojima's classification
\cite[Table~1]{Koj1999},
the surface $Z'$ is one of
\[
O(4,1),\quad
O(2,2),\quad
O(1,1,1),\quad
H[-1,0,-1],\quad
H[0,0].
\]

Since the open immersion
$Z'\hookrightarrow Z$
induces a surjective homomorphism on fundamental groups, $\pi_1(Z')$
is also infinite.

We first determine the intermediate surface $Z$. For this, the key step is to identify its strongly minimal model $Z'$.

\medskip
\noindent\textbf{Claim.}
\emph{$Z'=Z\cong\C^*\times\C^*$.}

\smallskip
\noindent\emph{Proof of the claim.}
Since $\pi_1(Z')\twoheadrightarrow\pi_1(Z)$ and $\pi_1(Z)$ is infinite,
the surface $Z'$ must have infinite fundamental group. By
\cite[Table~1]{Koj1999}, it follows that $Z'$ is one of
\[
O(4,1),\quad
O(2,2),\quad
O(1,1,1),\quad
H[-1,0,-1],\quad
H[0,0].
\]

Suppose that $Z'$ is one of
$O(4,1)$,
$O(2,2)$,
or
$H[0,0]$.
Then $\pi_1(Z')\cong\Z$ (see, \cite[Table~1]{Koj1999}). Since
$\pi_1(Z')\twoheadrightarrow\pi_1(Z)$, and $\pi_1(Z)$ is infinite, we obtain $\pi_1(Z)\cong\Z$.
Moreover,
\[
e(Z)=e(Z')+e(Z\setminus Z').
\]
By \cite[Lemma~1.4]{Koj1999}, the complement
$Z\setminus Z'$
is a (possibly empty) disjoint union of affine lines. Hence
$e(Z)\ge e(Z')>0$,
contradicting
Proposition~\ref{Prop: No finite map from 2-torus to surface having infinite cyclic pi_1 and positive Euler characteristic}.
Therefore,
$Z'$ is isomorphic to either
$O(1,1,1)$
or
$H[-1,0,-1]$.

We now show that $Z=Z'$.
Suppose, to the contrary, that $e(Z)>0$.
Since
$h_\ast\colon\pi_1(X)\to\pi_1(Z)$
is surjective and
$\pi_1(X)\cong\Z^2$,
the group $\pi_1(Z)$ is abelian. By Corollary~\ref{Cor: No finite map from 2-torus to surface having rank 1 abelian pi_1 and positive Euler characteristic},
$\pi_1(Z)$ cannot have rank one. Hence
$\pi_1(Z)\cong\Z^2$. Since $e(Z)>0$, we have $b_2(Z)\ge2$.
On the other hand, by Lemma \ref{Lem: Finite morphism basic properties}(2), the finite surjective morphism
$h\colon X\to Z$
implies
$b_2(Z)\le b_2(X)=1$,
a contradiction.
Therefore $e(Z)=0$ and thus $e(Z')=0$ too. Since
$e(Z)\ge e(Z')$
and equality holds if and only if
$Z\setminus Z'=\varnothing$,
by \cite[Lemma~1.4]{Koj1999},
we conclude that
$Z=Z'$.

Finally, from \cite[Table~1]{Koj1999}, it follows that
$\pi_1(H[-1,0,-1])=\langle y, t \mid  yt y^{-1} = t^{-1}\rangle$, which is a nonabelian group, whereas
$\pi_1(Z)$ is abelian as we observed above due to
$h_\ast$ being surjective. Hence $Z$ cannot be isomorphic to $H[-1,0,-1]$.
Therefore,
\[
Z=Z'\cong O(1,1,1)\cong\C^*\times\C^*,
\]
proving the claim.

\medskip

Having identified $Z$, we now complete the proof. Since $Z\cong\C^*\times\C^*$ is an Eilenberg--MacLane $K(\pi,1)$-space and $g\colon Z\to Y$ is a finite \'etale covering, it follows that $Y$ is also an Eilenberg--MacLane $K(\pi,1)$-space. By \cite[Theorem~5.9]{GGH2023}, the only smooth
affine Eilenberg--MacLane surfaces of logarithmic Kodaira dimension zero are $\C^*\times\C^*$ and $H[-1,0,-1]$. Hence
\[
Y\cong\C^*\times\C^*
\quad\text{or}\quad
Y\cong H[-1,0,-1],
\]
which completes the proof.
\end{proof}

\section*{\bf Acknowledgements}

The author expresses his sincere gratitude to Prof.~R.~V.~Gurjar and Prof.~S.~R.~Gurjar for several helpful discussions. The author acknowledges financial support from the Department of Science and Technology, Government of India, through the INSPIRE Faculty Fellowship (Reference No.: DST/INSPIRE/04/2024/003379).

\section*{\bf Data Availability Statement}

Data sharing does not apply to this article, as no datasets were generated or analysed during the current study.

\section*{\bf Declarations}

The authors declare that they have no competing interests. No additional funding was received for this work other than that acknowledged above.

\bibliographystyle{alpha}
\bibliography{ref}

\end{document}